\documentclass{amsart}

\usepackage{enumerate}
\usepackage{amssymb}
\usepackage{amsfonts}
\usepackage{latexsym}
\usepackage{amsmath}
\usepackage{euscript}
\usepackage{graphics}
\usepackage{graphicx}
\usepackage{tikz}
\usetikzlibrary{spy}
\graphicspath{ {./images/} }

\newtheorem{theorem}{Theorem}[section]
\newtheorem{proposition}[theorem]{Proposition}

\newtheorem{lemma}[theorem]{Lemma}
\newtheorem{definition}[theorem]{Definition}
\theoremstyle{remark}
\newtheorem{example}[theorem]{Example}

\newtheorem{remark}[theorem]{Remark}

\numberwithin{equation}{section}

\usepackage[colorinlistoftodos]{todonotes}

\begin{document}
\title[Quasi-Orthogonal Polynomials and Exceptional 
Sequences]{Quasi-Orthogonal Polynomials and Exceptional 
Sequences}

\author[R.~Bailey]{Rachel~Bailey}
\address{
RB,
Department of Mathematical Sciences\\
Bentley University\\
175 Forest Street\\
 Waltham, MA 02452, USA}
\email{rbailey@bentley.edu}

\author[R.~Gavrilov]{Roman~Gavrilov}
\address{
RG,
Department of Statistics \\
Columbia University\\
116th and Broadway\\
New York, NY 10027, USA}
\email{rg3801@columbia.edu}

\subjclass{Primary 33C45, 42C05; Secondary , 33E30, 64Q10.}
\keywords{Recurrence relation; Geronimus transformation; discrete Darboux transformations; exceptional polynomials.}

\begin{abstract}
{Motivated by recent developments in Exceptional Orthogonal Polynomials (XOPs), which feature sequences of orthogonal polynomials missing finitely many degrees, we develop a construction of monic orthogonal polynomial sequences that omit a single degree using linear combinations of classical families. We then relate these polynomial families to quasi-orthogonal polynomials of order 2.}
\end{abstract}

\maketitle

\section{Introduction}
Consider an orthogonal polynomial sequence (OPS) $\{P_n(x)\}_{n=0}^{\infty}$ which are orthogonal with respect to a  measure $\mu(x)$ on $\mathbb{R}$, and let $m$ be a fixed integer such that $0\leq m<n$. It is known that defining a new sequence of polynomials $\{Q_n(x)\}_{n=0}^{\infty}$ by
\[
Q_n(x):=a_{n,n}P_n(x)+a_{n,n-1}P_{n-1}(x)+\dots+a_{n,m}P_{n-m}(x)
\] where $a_{n,j}$ are real numbers such that $a_{n,n}\neq 0$ results in a \textit{quasi-orthogonal} polynomial sequence of order $m$, which is a sequence of polynomials satisfying 
\[
\int_{-\infty}^{\infty}p(x)Q_n(x)d\mu(x)=0 
\] where $p(x)$ is a polynomial of degree at most $n-m-1$. Many authors have studied how the linear functional is perturbed so that the sequence $\{Q_n(x)\}_{n=0}^{\infty}$ forms an OPS with respect to this new linear functional. Our focus will be linear combinations of the form
\begin{equation}\label{eq:quasi}
Q_n(x)=P_n(x)+B_n(\kappa)P_{n-1}+A_n(\kappa)P_{n-2}, A_n(\kappa)\neq 0, n\geq 2.
\end{equation} which is associated with two iterations of the Geronimus transformation at a point $\kappa \in \mathbb{C}$. 
From our point of view, the Geronimus transformation divides the original orthogonality measure by $x-\kappa$ and adds a mass point (one may refer to \cite{MarcellanBueno} and \cite{DM13}  for more information on the Geronimus transformation.) A characterization of when $\{Q_n(x)\}_{n=0}^{\infty}$ results in an OPS with respect to a quasi-definite linear functional is discussed in \cite{Marcellan2011}. Additionally, the particular case when the Geronimus transformation results in an OPS with respect to $\frac{d\mu(x)}{|x-\kappa|^2}$ are given  \cite{BD23} and it was shown in \cite{DGM14} that multiple Geronimus transformations can result in Sobolev type orthogonal polynomials. 

There has been growing interest in exceptional orthogonal polynomials (XOPs), which are complete families of orthogonal polynomials that are missing finitely many degrees in the sequence (see \cite{D21}, \cite{GGM19} and \cite{GM20} for recent work).  In \cite{Bailey2024}, the authors analyze a sequence of polynomials studied by Dubov, Eleonski and Kulagin in 1992 (referred to DEK polynomials hereafter) which are exceptional orthogonal polynomials with respect to $\frac{e^{-x^2/2}}{(1+x^2)^2}$. The authors leverage the fact that the DEK polynomials are linear combinations of Hermite polynomials to show how to construct sequences of orthogonal polynomials which omit degree 1 and 2 polynomials through dividing the orthogonality measure by $(1+x^2)^2$. Motivated by that work, the main goal of this paper is to connect exceptional sequences of polynomials to quasi-orthogonal polynomials and investigate what conditions may be violated in the Geronimus transformation so that the we can obtain a sequence of orthogonal polynomials which omit one degree.

The structure of this paper is the following. Section \ref{sec:setup} sets the notation and provides the tools we will use for proofs later in the paper. Focusing on the case when the measure $\mu(x)$ is symmetric, we construct sequences of polynomials orthogonal with respect to $\frac{d\mu(x)}{c+x^2}$ for a positive number $c$ in Section \ref{sec:order 1} and show that this construction results in a sequence which does not contain a degree 1 polynomial. In section \ref{sec:quasi}, we show how the exceptional sequences are related to quasi-orthogonal polynomials of order 2. An explicit example utilizing the monic Chebyshev polynomials of the first kind is also given.

\section{The Set Up}\label{sec:setup} The following properties of orthogonal polynomials will be used in the remainder of the paper so we include them here for reference. Below we use $\mathbb{N}$ to denote the set of nonnegative integers.
\begin{definition}[Orthogonal Polynomial Sequence]\label{def:OPS}
Let $\mu(x)$ be a measure on $\mathbb{R}$ with finite moments. A sequence of polynomials $\{P_n(x)\}_{n=0}^{\infty}$ is an Orthogonal Polynomial Sequence (OPS) with respect to $\mu(x)$ if
\begin{enumerate}
\item $P_n(x)$ is a polynomial of degree $n$ for $n=0,1,2,\dots$
    \item $\displaystyle \int_{-\infty}^{\infty}P_n(x)P_m(x)d\mu(x)=0 \text{ for } n\neq m$,
    \item $\displaystyle \int_{-\infty}^{\infty}P^2_n(x)d\mu(x)\neq 0  $ for any $n$.
\end{enumerate}
\end{definition}
The following proposition is useful for determining if a sequence of polynomials is an OPS.

\begin{proposition}[Theorem 2.1, \cite{Chihara}]\label{prop:OPSequiv}Let $\{P_{n}(x)\}_{n=0}^{\infty}$ be a sequence of polynomials. Then the following are equivalent:
\begin{enumerate}
   \item $\displaystyle\{P_n(x)\}_{n=0}^{\infty}$ is an orthogonal polynomial sequence with respect to $\mu(x)$.
    \item $\displaystyle \int_{-\infty}^{\infty} p(x)P_{n}(x)d\mu(x)=0$ for every polynomial $p(x)$ of degree $m<n$,  while $\displaystyle \int_{-\infty}^{\infty}p(x)P_{n}(x)d\mu(x)\neq 0$ if $m=n$.
    \item $\displaystyle \int_{-\infty}^{\infty} x^{m}P_{n}(x)d\mu(x)=0$ for $m<n$.
\end{enumerate}  
\end{proposition}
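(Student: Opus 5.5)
We prove the cycle of implications (1)$\Rightarrow$(2)$\Rightarrow$(3)$\Rightarrow$(1). Throughout, write $\langle f,g\rangle=\int_{-\infty}^{\infty} f(x)g(x)\,d\mu(x)$, which is bilinear. The one tool we need is that any sequence $\{P_k\}$ with $\deg P_k=k$ is a basis for the polynomials. So any $p$ of degree $m$ can be written uniquely as $p=\sum_{k=0}^{m} c_k P_k$, with $c_m\neq 0$. This follows by induction on degree: subtract the appropriate multiple of $P_m$ to kill the leading term, then recurse.

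\emph{(1)$\Rightarrow$(2).} Suppose $\deg p=m<n$. Expand $p=\sum_{k\le m}c_kP_k$. Bilinearity and condition (2) of Definition~\ref{def:OPS} give $\langle p,P_n\rangle=\sum_k c_k\langle P_k,P_n\rangle=0$. If instead $m=n$, the same expansion leaves only the top term: $\langle p,P_n\rangle=c_n\langle P_n,P_n\rangle$. This is nonzero because $c_n\ne0$ and by condition (3) of the definition.

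\emph{(2)$\Rightarrow$(3).} Take $p=x^m$ with $m<n$; then (2) gives the claim immediately.

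\emph{(3)$\Rightarrow$(1).} Here the point needing care is that, as literally stated, (3) only records vanishing against lower monomials. It therefore does not by itself guarantee the nondegeneracy $\langle P_n,P_n\rangle\neq0$, nor does it state that $\deg P_n=n$. Following Chihara's formulation, I would read (3) as also including $\langle x^n,P_n\rangle\neq 0$ and $\deg P_n=n$. These are the standing hypotheses for a sequence to be an OPS at all, so the equivalence concerns only the orthogonality conditions.

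Granting this reading, let $n\neq m$, and by symmetry assume $m<n$. Expand $P_m$ in monomials of degree $\le m<n$; by (3) each term pairs to zero with $P_n$, so $\langle P_m,P_n\rangle=0$. For the norm, write $P_n=a_nx^n+(\text{lower})$ with $a_n\neq0$. Then $\langle P_n,P_n\rangle=a_n\langle x^n,P_n\rangle$, since the lower-order terms vanish by (3). This is nonzero by the reading above.

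Checking that this nondegeneracy is part of hypothesis (3) is the only potential obstacle. The remaining steps are the routine linear-algebra expansions described above.
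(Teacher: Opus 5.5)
The paper gives no proof of its own (it simply quotes Chihara's Theorem~2.1), and your argument is exactly the standard proof there: expand $p$ in the basis $\{P_k\}$, specialize to $p=x^m$, and expand $P_m$ and $P_n$ in monomials. Your diagnosis of the statement is also right: (3) as printed drops Chihara's requirement $\int x^nP_n\,d\mu\neq0$, and $\deg P_n=n$ must be a standing assumption, since neither (2) nor Chihara's (c) forces it (e.g.\ $P_n=Q_n+Q_{n+1}$, with $\{Q_n\}$ the genuine OPS, satisfies both but is not orthogonal). In the paper's own setting of a positive measure with infinite support, the missing nondegeneracy is automatic once $\deg P_n=n$, because then $\int P_n^2\,d\mu>0$; your augmented reading is essential only for signed or finitely supported $\mu$, or for general moment functionals.
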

If we know a sequence of polynomials is orthogonal to $\mu(x)$ in the sense of Definition \ref{def:OPS}, then the next proposition says that we know what each polynomial is, up to scaling. 
\begin{proposition}[Uniqueness of OPS \cite{Chihara}]
If $\{P_{n}(x)\}_{n=0}^{\infty}$ is an OPS with respect to $\mu(x)$, then each $P_{n}(x)$ is uniquely determined up to an arbitrary non-zero factor. That is, if $\{Q_{n}(x)\}_{n=0}^{\infty}$ is also an orthogonal polynomial sequence with respect to $\mu(x)$, then there are constants $c_{n}\neq 0$ such that: \[Q_{n}(x)=c_{n}P_{n}(x), \, n=0,1,2,...\]
\end{proposition}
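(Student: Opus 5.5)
The plan is to argue by strong induction on $n$, using the characterization of an OPS in Proposition~\ref{prop:OPSequiv}. The key tool is that each $P_k$ has degree exactly $k$. This makes $\{P_0,\dots,P_n\}$ a basis for the space of polynomials of degree at most $n$, so $Q_n$ can be expanded as $Q_n(x)=\sum_{k=0}^{n} a_{n,k}P_k(x)$. The leading coefficient $a_{n,n}\neq 0$, because $\deg Q_n=n$ and only $P_n$ contributes $x^n$.

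Next, I will identify the coefficients. Multiplying the expansion by $P_j$ and integrating against $\mu$, orthogonality of the $P_k$ kills every term with $k\neq j$. This gives
\[
\int Q_nP_j\,d\mu = a_{n,j}\int P_j^2\,d\mu,
\]
and the right-hand integral is nonzero by condition (3) of Definition~\ref{def:OPS}. On the other hand, for $j<n$ the polynomial $P_j$ has degree less than $n$. Proposition~\ref{prop:OPSequiv}(2), applied to the OPS $\{Q_n\}$, makes the left side vanish, so $a_{n,j}=0$ for all $j<n$. Hence $Q_n=a_{n,n}P_n$, and we set $c_n:=a_{n,n}\neq 0$. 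Strictly, this argument needs no induction: it works for each $n$ separately.

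The only point needing care is that the measure may be a signed or quasi-definite functional, not a positive measure. So I must use only the nondegeneracy condition $\int P_j^2\,d\mu\neq 0$, never positivity. With that, the division step is legitimate, and it is the sole potential obstacle. Everything else is linear algebra over the basis $\{P_k\}$.
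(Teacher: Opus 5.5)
Your argument is correct and is essentially the standard proof in Chihara, which the paper cites without reproducing. You expand $Q_n$ in the basis $P_0,\dots,P_n$, kill the lower coefficients using Proposition~\ref{prop:OPSequiv} for $\{Q_n\}$ together with $\int P_j^2\,d\mu\neq 0$, and get $c_n=a_{n,n}\neq 0$ from the degree. As you note yourself, the opening mention of induction is unnecessary.
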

In particular, if an OPS $\{P_n(x)\}_{n\in \mathbb{N}}$ with respect to $\mu(x)$ is monic (the leading coefficient of $P_n(x)$ is 1 for all $n$), then it is the only monic OPS with respect to $\mu(x)$.

The next proposition states one of the fundamental properties of orthogonal polynomials; that they satisfy a three-term recurrence relation.
\begin{proposition}[Theorem 4.1 \cite{Chihara}]\label{prop:recurrence}
Let $\{P_{n}(x)\}_{n=0}^{\infty}$ be monic OPS with respect to $\mu(x)$. Then there exist real constants $c_{n}$ and $\lambda_{n}> 0$ such that:
\begin{equation}\label{eq:recrel}
P_{n}(x)=(x-c_{n})P_{n-1}(x)-\lambda_{n}P_{n-2}(x), \quad n=1,2,3,... 
\end{equation}
where we define $P_{-1}(x)=0$. 
\end{proposition}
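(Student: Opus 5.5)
The strategy is the standard one: show that $xP_{n-1}(x)$ differs from $P_n(x)$ by a combination of only $P_{n-1}$ and $P_{n-2}$, read off $c_n$ and $\lambda_n$ from the orthogonality in Proposition \ref{prop:OPSequiv}, and then check the sign of $\lambda_n$ using that $\mu$ is a positive measure.

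First I will fix notation. Set $h_k=\int_{-\infty}^{\infty}P_k^2(x)\,d\mu(x)$. Since $\mu$ is a (positive) measure, $h_k\ge 0$, and Definition \ref{def:OPS}(3) gives $h_k\neq 0$, so $h_k>0$. The polynomials $P_0,\dots,P_{n}$ have degrees $0,\dots,n$, so they form a basis of the polynomials of degree at most $n$.

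Next, fix $n\ge 1$. Because $P_n$ and $P_{n-1}$ are monic, $xP_{n-1}(x)-P_n(x)$ has degree at most $n-1$. Expanding in the basis gives
\[
xP_{n-1}(x)=P_n(x)+\sum_{k=0}^{n-1}d_{n,k}P_k(x),\qquad d_{n,k}=\frac{1}{h_k}\int_{-\infty}^{\infty} xP_{n-1}(x)P_k(x)\,d\mu(x),
\]
where the formula for $d_{n,k}$ follows by integrating against $P_k$ and using orthogonality. Writing $\int xP_{n-1}P_k\,d\mu=\int P_{n-1}\,(xP_k)\,d\mu$, the factor $xP_k$ has degree $k+1$. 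By Proposition \ref{prop:OPSequiv}(2), this integral vanishes when $k+1<n-1$, i.e. $k\le n-3$. So only $k=n-1$ and $k=n-2$ survive. Setting $c_n=d_{n,n-1}$ and $\lambda_n=d_{n,n-2}$ gives \eqref{eq:recrel}. For $n=1$ the $P_{n-2}=P_{-1}=0$ term is absent by convention, so $\lambda_1$ may be chosen arbitrarily, for instance positive. All these constants are real because $P_k$ and $\mu$ are real.

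Finally, positivity of $\lambda_n$ for $n\ge2$ is the only slightly delicate point. Since $xP_{n-2}=P_{n-1}+(\text{lower degree})$, orthogonality gives
\[
\int xP_{n-1}P_{n-2}\,d\mu=\int P_{n-1}\,(xP_{n-2})\,d\mu=h_{n-1}.
\]
Hence $\lambda_n=h_{n-1}/h_{n-2}>0$. The key step is recognizing that $h_k>0$, not merely $h_k\neq0$. This uses positivity of $\mu$: for a merely quasi-definite functional one would only get $\lambda_n\neq 0$. No step is a serious obstacle.
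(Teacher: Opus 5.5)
Your proof is correct and is essentially the standard argument behind Chihara's Theorem 4.1, which the paper cites without giving a proof of its own: you expand $xP_{n-1}$ in the basis $\{P_k\}$, use orthogonality to kill all but the two top coefficients, and get positivity from $\lambda_n = h_{n-1}/h_{n-2}$, which is the identity of Proposition~\ref{prop:reccoeff}. One step relies on an unstated assumption, namely that the $P_k$ have real coefficients, which you need for $h_k \ge 0$; this is standard here and also follows from uniqueness of the monic OPS, since conjugating the coefficients of a monic OPS for a real measure gives another monic OPS.
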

Given an OPS, we can recover the coefficients of the three-term recurrence relation in the following way.
\begin{proposition}[Theorem 4.2 \cite{Chihara}]\label{prop:reccoeff}
    With reference to Proposition \ref{prop:recurrence}, the following are valid for $n\ge 1$:
\begin{enumerate}
    \item $\lambda_{n+1}=\displaystyle \frac{\int_{-\infty}^{\infty} P_{n}^{2}(x)d\mu(x)}{\int_{-\infty}^{\infty} P_{n-1}^{2}(x)d\mu(x)}$;\quad $\displaystyle c_{n}=\frac{\int_{-\infty}^{\infty} xP_{n-1}^{2}(x)d\mu(x)}{\int_{-\infty}^{\infty} P_{n-1}^{2}(x)d\mu(x)}$
    \item $\displaystyle \int_{-\infty}^{\infty} P_{n}^{2}(x)d\mu(x)=\lambda_{1}\lambda_{2}\cdot\cdot\cdot\lambda_{n+1}$, \text{ provided we define } $\lambda_{1}=\int_{-\infty}^{\infty}d\mu(x)$.
  
\end{enumerate}
\end{proposition}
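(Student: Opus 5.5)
Both parts follow from the three-term recurrence of Proposition \ref{prop:recurrence} together with orthogonality, as characterized in Proposition \ref{prop:OPSequiv}. Throughout, write $h_k=\int_{-\infty}^{\infty}P_k^2(x)\,d\mu(x)$, which is nonzero by Definition \ref{def:OPS}.

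\textbf{Part (1).} I would rewrite \eqref{eq:recrel} with index shifted as
\[
xP_{n-1}(x)=P_n(x)+c_nP_{n-1}(x)+\lambda_nP_{n-2}(x).
\]
For $c_n$, multiply this identity by $P_{n-1}(x)$ and integrate against $\mu$. The $P_n$ term vanishes because $P_{n-1}$ has degree less than $n$, and the $P_{n-2}$ term vanishes by orthogonality. This leaves
\[
\int_{-\infty}^{\infty} xP_{n-1}^2(x)\,d\mu(x)=c_n h_{n-1},
\]
and dividing by $h_{n-1}$ gives the formula for $c_n$.

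For $\lambda_{n+1}$, use the same identity at index $n+1$, multiply by $P_{n-1}$ and integrate. By orthogonality only the $\lambda_{n+1}P_{n-1}$ term survives, so
\[
\int_{-\infty}^{\infty} xP_nP_{n-1}\,d\mu=\lambda_{n+1}h_{n-1}.
\]
Next, because $P_{n-1}$ is monic, $xP_{n-1}=P_n+(\text{a polynomial of degree}<n)$. Orthogonality of $P_n$ to lower degrees (Proposition \ref{prop:OPSequiv}) then gives
\[
\int_{-\infty}^{\infty} xP_{n-1}P_n\,d\mu=h_n.
\]
Comparing the two expressions yields $\lambda_{n+1}=h_n/h_{n-1}$.

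\textbf{Part (2).} This is a telescoping product. Set $\lambda_1=h_0=\int d\mu$, which is consistent because $P_0=1$. By part (1), $h_k=\lambda_{k+1}h_{k-1}$ for each $k\ge1$. Inducting on $n$ gives
\[
h_n=\lambda_{n+1}\lambda_n\cdots\lambda_2\,h_0=\lambda_1\lambda_2\cdots\lambda_{n+1}.
\]

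The argument has no real obstacle. The only delicate point is the identity $\int xP_{n-1}P_n\,d\mu=h_n$, which relies on the monic normalization. One should also note that all divisions are by the $h_k$, which are nonzero by Definition \ref{def:OPS}.
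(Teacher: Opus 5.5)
Your proof is correct and complete. The paper gives no proof of its own here; it simply quotes Theorem 4.2 of \cite{Chihara}, and your route (pairing the shifted recurrence with $P_{n-1}$, using monicity to get $\int xP_{n-1}P_n\,d\mu=\int P_n^2\,d\mu$, then telescoping with $\lambda_1=\int d\mu$) is essentially the standard argument behind that cited result.
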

In what follows, we will consider symmetric measures on $\mathbb{R}$, thus, the next proposition provides useful insight into the corresponding OPS in this case.
\begin{proposition}[Theorem 4.3 \cite{Chihara}]\label{prop:symmetric}
    Let $\{P_{n}(x)\}_{n=0}^{\infty}$ be the monic OPS with respect to $\mu(x)$. Then the following are equivalent:
\begin{enumerate}
    \item $\mu(x)$ is symmetric on symmetric subsets of $\mathbb{R}$;
    \item $P_{n}(-x)=(-1)^{n}P_{n}(x)$ for $n\ge 0$;
    \item In the corresponding three-term recurrence formula, $c_{n}=0$ ($n\ge 1$).
\end{enumerate}
\end{proposition}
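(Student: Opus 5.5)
We prove the three implications $(1)\Rightarrow(2)\Rightarrow(3)\Rightarrow(1)$, using the uniqueness of the monic OPS and the recurrence coefficient formulas of Proposition \ref{prop:reccoeff}.

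\emph{$(1)\Rightarrow(2)$.} Set $\tilde P_n(x):=(-1)^nP_n(-x)$. This is again a monic polynomial of degree $n$. Because $\mu$ is symmetric, the substitution $x\mapsto -x$ leaves integrals against $d\mu$ unchanged, so
\[
\int \tilde P_n\tilde P_m\,d\mu=(-1)^{n+m}\int P_n(x)P_m(x)\,d\mu(x).
\]
This vanishes for $n\neq m$ and is nonzero for $n=m$. Hence $\{\tilde P_n\}$ is a monic OPS with respect to $\mu$. By the uniqueness proposition, $\tilde P_n=P_n$, which is exactly $(2)$.

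\emph{$(2)\Rightarrow(3)$.} By Proposition \ref{prop:reccoeff},
\[
c_n=\frac{\int xP_{n-1}^2\,d\mu}{\int P_{n-1}^2\,d\mu}.
\]
I would avoid arguing that the integrand $xP_{n-1}^2$ is odd, since that would use symmetry of $\mu$, which is not assumed here. Instead, substitute $x\mapsto -x$ into the recurrence \eqref{eq:recrel} and use $P_k(-x)=(-1)^kP_k(x)$. Multiplying through by $(-1)^n$ gives
\[
P_n(x)=(x+c_n)P_{n-1}(x)-\lambda_nP_{n-2}(x).
\]
Subtracting this from \eqref{eq:recrel} yields $2c_nP_{n-1}(x)=0$. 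Since $P_{n-1}\neq0$, we conclude $c_n=0$.

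\emph{$(3)\Rightarrow(1)$.} With $c_n=0$, induction on the recurrence $P_n=xP_{n-1}-\lambda_nP_{n-2}$ shows that $P_n$ has the parity of $n$. The goal is to show that every odd moment vanishes, $\int x^{2k+1}\,d\mu=0$. Expand $x^{2k+1}$ in the basis $\{P_j\}$: since $x^{2k+1}$ is odd, only odd-index $P_j$ appear, so $x^{2k+1}$ is a combination of $P_j$ with $j\ge1$. Each of these has $\int P_j\,d\mu=\int P_jP_0\,d\mu=0$, so every odd moment vanishes.

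The main obstacle is the final step: passing from vanishing odd moments to symmetry of $\mu$ "on symmetric subsets". This needs the measure to be determined by its moments. For a quasi-definite functional, or under moment determinacy (e.g.\ compact support), the reflected measure $\mu(-\cdot)$ has the same moments as $\mu$ and therefore coincides with it. I would state this as the interpretation of $(1)$, following Chihara, who phrases symmetry at the level of moments of the functional.
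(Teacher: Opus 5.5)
Your proof is correct and is essentially the standard argument for Chihara's Theorem 4.3; the paper gives no proof of its own, only the citation. The three steps are: uniqueness of the monic OPS for (1)$\Rightarrow$(2); the substitution $x\mapsto -x$ in the recurrence for (2)$\Rightarrow$(3); and parity together with $\int P_j\,d\mu=0$ for $j\ge 1$ to make the odd moments vanish in (3)$\Rightarrow$(1). Throughout, "symmetric" is understood, as in Chihara, to mean that all odd moments vanish.

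Your caveat about the final step is genuinely needed. For an indeterminate symmetric moment sequence, the N-extremal representing measures with parameter $t\neq 0,\infty$ are supported on discrete sets disjoint from their own reflections. Yet they have the same moments, and hence the same monic OPS, as the symmetric solution. So (2) and (3) do not imply symmetry of the measure without determinacy.

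Quasi-definiteness does not help at the measure level, so the phrase "for a quasi-definite functional" in your last paragraph should be dropped. Keep only the moment-level reading of (1), or add a determinacy hypothesis. The paper only ever uses (1)$\Rightarrow$(2),(3), and your argument proves those directions for arbitrary measures.
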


Additionally, if $\mu(x)$ is symmetric, we know what form the third degree polynomial in the sequence takes, which will be useful for our analysis later.
\begin{proposition}\label{prop:p3}Let $\{P_n(x)\}_{n=0}^{\infty}$ be a sequence of monic orthogonal polynomials with respect to a symmetric measure $\mu(x)$ supported on a symmetric subset of $\mathbb{R}$. Then $P_3(x)=x^3+ax$ where $a\neq 0$.
\end{proposition}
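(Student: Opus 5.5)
We will use the three-term recurrence of Proposition \ref{prop:recurrence} together with the symmetry statement of Proposition \ref{prop:symmetric}. Since $\mu(x)$ is symmetric, Proposition \ref{prop:symmetric} gives $c_n=0$ for all $n\ge 1$. The recurrence \eqref{eq:recrel} then reads $P_n(x)=xP_{n-1}(x)-\lambda_n P_{n-2}(x)$, with $P_{-1}=0$ and $P_0=1$.

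We compute the first few polynomials directly:
\[
P_1(x)=x,\qquad P_2(x)=x^2-\lambda_2,\qquad P_3(x)=xP_2(x)-\lambda_3P_1(x)=x^3-(\lambda_2+\lambda_3)x .
\]
This shows $P_3$ has the form $x^3+ax$ with $a=-(\lambda_2+\lambda_3)$. Alternatively, the form also follows from the parity relation $P_3(-x)=-P_3(x)$ in Proposition \ref{prop:symmetric}, which kills the even coefficients.

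It remains to show $a\neq 0$. By Proposition \ref{prop:recurrence}, $\lambda_n>0$ for every $n$, so $a=-(\lambda_2+\lambda_3)<0$. To make this concrete, Proposition \ref{prop:reccoeff} gives
\[
\lambda_{n+1}=\frac{\int P_n^2\,d\mu}{\int P_{n-1}^2\,d\mu},
\]
and these ratios are positive for a positive measure whose support contains at least four points. We need that many points so that $P_0,\dots,P_3$ all have nonzero norm, which is implicit in $\{P_n\}$ being an OPS in the sense of Definition \ref{def:OPS}.

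The only delicate point is this positivity. If one allowed a merely quasi-definite (signed) functional, $\lambda_2+\lambda_3$ could vanish. So the main step is to invoke $\lambda_n>0$ from Proposition \ref{prop:recurrence}, which is justified because $\mu$ is a genuine positive measure with infinitely many support points (needed for the OPS to exist). With that in hand, $a<0$, and in particular $a\neq 0$.
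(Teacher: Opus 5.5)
Your proof is correct and follows the paper's argument essentially verbatim. You set $c_n=0$ by symmetry, run the recurrence to get $P_3(x)=x^3-(\lambda_2+\lambda_3)x$, and use $\lambda_n>0$ for a positive measure to conclude $a\neq 0$. The only cosmetic difference is that the paper obtains $c_n=0$ from the formula for $c_n$ in Proposition \ref{prop:reccoeff}, whose numerator $\int xP_{n-1}^2\,d\mu$ vanishes by symmetry, whereas you cite Proposition \ref{prop:symmetric} directly.
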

\begin{proof} Recall that if $\{P_n(x)\}_{n=0}^{\infty}$ is a sequence of monic orthogonal polynomials, then Then there exist constants $c_n$ and $\lambda_n>0$ such that
\begin{equation}\label{def:rec2}
xP_{n-1}(x)=P_{n}(x)+c_nP_{n-1}(x)+\lambda_nP_{n-2}(x)
\end{equation} where $P_{-1}:=0$. If $\mathcal{L}$ is positive-definite then for $n \geq 1$, we have that $c_n\in \mathbb{R}$ and $\lambda_{n+1}>0$. 
The coefficients $c_n$ are as in given Proposition \ref{prop:reccoeff}.
Since $\mu(x)$ is symmetric, the numerator of $c_n$ is zero and hence $c_n=0$ for all $n=1,2,\dots$. Thus, $P_1(x)=(x-c_1)P_0(x)=x-c_1=x$. Similarly,
\begin{align*}
P_2(x)&=(x-c_2)P_1(x)-\lambda_2P_0(x)\\
&=xP_1(x)-\lambda_2\\
&=x^2-\lambda_2
\end{align*} so that
\begin{equation}\label{eq:p3}
P_3(x)=(x-c_3)P_2(x)-\lambda_3P_1(x)=xP_2(x)-\lambda_3x\\
=x^3-(\lambda_2+\lambda_3)x.
\end{equation}Since $\lambda_n >0$ for all $n=1,2,\dots$, we have that $a=-(\lambda_2+\lambda_3)\neq 0.$

\end{proof}

\section{DEK Type Orthogonal Polynomials Skipping Degree 1}\label{sec:order 1}

Inspired by the orthogonality measure of the Exceptional Hermite polynomials explored by Dubov, Eleonski and Kulagin in \cite{DEK94}, we will explore the case the orthogonality measure $\mu(x)$ is divided by $\phi(x)=c+x^2$  for a positive, real number $c$.  Let $P_n(x)$ be a sequence of orthogonal polynomials with respect to $\mu(x)$.
We will restrict our analysis to when $\mu(x)$ is a
symmetric measure which is supported on a  symmetric, infinite subset of $\mathbb{R}$. 

Since the goal is to mimic DEK polynomials which omit a degree 1 and degree 2 polynomial, define polynomials $R_n(x,c)$ by 

\begin{equation}\label{eq:Rn_2term}
    R_n(x,c):=\begin{cases}
    1, &n=0\\
     P_{n}(x)+A_{n-2}P_{n-2}(x),& n\geq 3
        \end{cases}
   \end{equation} such that
for $n\geq 3$,
\begin{equation}\label{cond:1}
   \int_{-\infty}^{\infty} R_0(x,c)R_n(x,c)\frac{d\mu(x)}{(c+x^2)}=0
    \end{equation}
    and for $n\geq 4$,
\begin{equation}\label{cond:2}
    \int_{-\infty}^{\infty}  R_3(x,c)R_n(x,c)\frac{d\mu(x)}{(c+x^2)}=0
    \end{equation}
where  $A_1:=c$.


\begin{remark}
    We remark that for $n\geq 2$, the coefficients $A_n=A_n(c)$ are dependent on $c$, but we omit this in our notation for simplicity.
\end{remark}
Note that by definition, the coefficients $A_n$ (given they exist) will always be real numbers, thus $R_n(x)$ is a real polynomial of degree $n$.

Following the notation of bilinear forms in \cite{DGM14}, let $[\cdot, \cdot]_{\phi}$ be the symmetric bilinear form defined by 
\[
[\phi(x)f(x),g(x)]_{\phi}=[f(x),\phi(x)g(x)]_{\phi}=\int_{-\infty}^{\infty}f(x)g(x)d\mu(x).
\]
The explicit representation for $[\cdot, \cdot]_{\phi}$ is given in Proposition 1 of \cite{DGM14} (plus some scaling). In the same paper, was shown that $\{R_n(x)\}_{n=0}^{\infty}$ is is an OPS with respect to $[\cdot, \cdot]_{\phi}$  if and only if for $n\geq 2$, they take the form
\begin{equation}\label{eq:gernomimus}
R_n(x)=\frac{1}{d^*_n}\begin{vmatrix}
P_n(x) & [P_n, 1]_{\phi}&[P_n,x]_{\phi}\\
P_{n-1}(x)&[P_{n-1},1]_{\phi}&[P_{n-1},x]_{\phi}\\
P_{n-2}(x)&[P_{n-2},1]_{\phi}&[P_{n-2},x]_{\phi}.
\end{vmatrix}
\end{equation} where \[d^*_n=\begin{vmatrix}
 [P_{n-1},1]_{\phi}&[P_{n-1},x]_{\phi}\\
[P_{n-2},1]_{\phi}&[P_{n-2},x]_{\phi}.
\end{vmatrix}\]
We are interested in the case $[f(x), g(x)]_{\phi}=\int_{-\infty}^{\infty}f(x)g(x)\frac{d\mu(x)}{\phi(x)}$. Since $\mu(x)$ is symmetric, \eqref{eq:gernomimus} reduces to 
\[
R_n(x)=P_n(x)-\frac{[P_n,x]_{\phi}}{[P_{n-2},x]_{\phi}}P_{n-2}(x)=P_n(x)+A_{n-2}P_{n-2}(x).
\]Thus, naively, if in \eqref{eq:Rn_2term} we pick $A_1\neq -\frac{[P_3,x]_{\phi}}{[P_{1},x]_{\phi}}$ for some fixed $n$, we expect to lose a polynomial in our orthogonal sequence. In the next section, we will show that instead, by letting $A_1=c\neq -\frac{[P_3,x]_{\phi}}{[P_{1},x]_{\phi}}$ and assuming conditions \eqref{cond:1} and \eqref{cond:2}, we can find a sequence of polynomials orthogonal with respect to $\frac{d\mu(x)}{c+x^2}$, which does not contain a 

Indeed, if there were a degree 1 polynomial in the sequence, it would have to be $r(x)=x$ by Proposition \ref{prop:symmetric}, but taking $A_1\neq -\frac{[P_3,x]_{\phi}}{[P_{1},x]_{\phi}}$ forces $R_3(x,c)$ and $r(x)=x$ to not be orthogonal with respect to $\frac{d\mu(x)}{c+x^2}$. 

In \cite{Bailey2024}, it was shown that for $\phi(x)=(1+x^2)^2$, the coefficients $A_n$ and $B_n$ in \ref{eq:Rn_2term} exist for the specific case when $P_n(x)=T_n(x)$ is the $n$-th degree monic Chebyshev polynomial. In the case we now consider, if $A_1=c$, then the $A_n$'s exist regardless of the choice of symmetric OPS. 


\begin{theorem}\label{thrm:an_1} Let $\{P_n(x)\}_{n=0}^{\infty}$ be an OPS with respect to $\mu(x)$ and define the family $\{R_n(x,c)\}_{n\in \mathbb{N}\setminus\{1,2\}}$ as in equation \eqref{eq:Rn_2term} with $A_1=c\neq -\frac{[P_3,x]_{\phi}}{[P_{1},x]_{\phi}}$. Then the $A_n$'s exists for all $n=1,2,3,\dots$
\end{theorem}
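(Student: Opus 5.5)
The plan is to use parity to reduce each orthogonality condition to a single linear equation for $A_{n-2}$, and then to show that the coefficient multiplying $A_{n-2}$ never vanishes. Throughout, write $[f,g]_\phi=\int f g\,\frac{d\mu}{c+x^2}$. Since $c>0$ and $\mu$ is a positive symmetric measure with infinite support, $d\mu/(c+x^2)$ is again a positive symmetric measure with infinite support. By Proposition \ref{prop:symmetric}, $P_n$ has the parity of $n$, and hence so does $R_n$. $A_1=c$ is given, and for $n\ge 4$ the polynomial $R_n$ involves $A_{n-2}$ with $n-2\ge 2$, so every $A_m$ with $m\ge 2$ must be determined; the $n=3$ instance of \eqref{cond:1} holds automatically by parity.

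\emph{Even $n\ge 4$.} Condition \eqref{cond:2} holds automatically, because $R_3R_n$ is odd. Condition \eqref{cond:1} reads $[P_n,1]_\phi+A_{n-2}[P_{n-2},1]_\phi=0$. So I would define
\[
A_{n-2}=-\frac{[P_n,1]_\phi}{[P_{n-2},1]_\phi},
\]
which requires $[P_{n-2},1]_\phi\neq 0$.

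\emph{Odd $n\ge 5$.} Condition \eqref{cond:1} holds automatically. For \eqref{cond:2}, I would use Proposition \ref{prop:p3}: $R_3=P_3+cP_1=x^3+(a+c)x=x(c+x^2)+ax$ with $a\neq 0$. Then
\[
[R_3,R_n]_\phi=\int xR_n\,d\mu + a[x,R_n]_\phi .
\]
The first term vanishes because $R_n$ is a combination of $P_n$ and $P_{n-2}$, both of degree at least $3>1$ (Proposition \ref{prop:OPSequiv}). Since $a\ne0$, the condition becomes $[P_n,x]_\phi+A_{n-2}[P_{n-2},x]_\phi=0$. So I would define
\[
A_{n-2}=-\frac{[P_n,x]_\phi}{[P_{n-2},x]_\phi},
\]
which requires $[P_{n-2},x]_\phi\ne 0$ with $n-2\ge 3$ odd.

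The main obstacle is proving the nonvanishing $[P_k,x^{\epsilon}]_\phi\neq 0$ for $k\ge 2$, where $\epsilon\in\{0,1\}$ is the parity of $k$. I would argue by contradiction. Suppose $[P_k,x^\epsilon]_\phi=0$. By parity, $[P_k,x^{1-\epsilon}]_\phi=0$ as well. Also, for every $q$ with $\deg q\le k-3$, $[P_k,(c+x^2)q]_\phi=\int P_k q\,d\mu=0$. Now $(c+x^2)\Pi_{k-3}$ contains no nonzero polynomial of degree $\le1$, so $\mathrm{span}\{1,x\}\oplus(c+x^2)\Pi_{k-3}$ has dimension $2+(k-2)=k$ and equals $\Pi_{k-1}$. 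Hence $P_k$ would be $[\cdot,\cdot]_\phi$-orthogonal to $\Pi_{k-1}$. Since $x^k=P_k+(\text{lower})$, this gives
\[
0=\int P_k\,x^{k-2}\,d\mu=[P_k,x^{k-2}(c+x^2)]_\phi = c[P_k,x^{k-2}]_\phi+[P_k,x^k]_\phi=0+[P_k,P_k]_\phi,
\]
where the first equality is orthogonality of $P_k$ with respect to $\mu$, using $k-2<k$. But $[P_k,P_k]_\phi>0$ by positivity of $d\mu/(c+x^2)$ and its infinite support, a contradiction. This gives the required nonvanishing for all $k\ge2$, so every $A_n$ is well defined.
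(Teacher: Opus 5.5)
Your proof is correct and follows essentially the same route as the paper. You reduce each orthogonality condition to a linear equation in $A_{n-2}$, and you show its coefficient cannot vanish: the coefficient is $[P_k,1]_\phi$ for even $k$, or, via $R_3=x(c+x^2)+ax$ with $a\neq 0$, $a[P_k,x]_\phi$ for odd $k$ (the paper extracts the same $\int xP_n\,\frac{d\mu}{c+x^2}$ condition from $\int R_3P_n\,\frac{d\mu}{c+x^2}$). In both arguments, vanishing of that coefficient, together with symmetry and the $\mu$-orthogonality of $P_k$ against $(c+x^2)\Pi_{k-2}$, would force $[P_k,P_k]_\phi=0$. Your spanning count $\mathrm{span}\{1,x\}\oplus(c+x^2)\Pi_{k-3}=\Pi_{k-1}$ packages the paper's moment-by-moment induction into one step and treats both parities uniformly.
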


\begin{proof} Note that for $n \geq 2$, $A_n$ takes the form
\begin{equation}\label{eq:an_1}
\displaystyle A_n=\begin{cases}\frac{-\int_{-\infty}^{\infty}P_{n+2}(x)\frac{
d\mu(x)}{c+x^2}}{ \int_{-\infty}^{\infty}P_{n}(x)\frac{
d\mu(x)}{c+x^2}},& \text{ $n$ even}\\
\frac{-\int_{-\infty}^{\infty}R_3(x)P_{n+2}(x)\frac{
d\mu(x)}{c+x^2}}{ \int_{-\infty}^{\infty}R_3(x)P_{n}(x)\frac{
d\mu(x)}{c+x^2}},&\text{ $n$ odd}
\end{cases}
\end{equation}

We first consider the case that $n$ is even. 
Suppose there exists $n\geq 2$ such that  $\int_{-\infty}^{\infty}P_{n}(x)\frac{
d\mu(x)}{c+x^2}=0$. We claim that this implies
\begin{equation}\label{eq:induction}
\int_{-\infty}^{\infty}x^kP_n(x)\frac{
d\mu(x)}{c+x^2}=0
\end{equation} for all $k$ even, $k \leq n$. First, note that by the orthogonality of the $P_n(x)$,
\begin{align*}
   0&= \int_{-\infty}^{\infty}P_n(x)d\mu(x)\\
   &=\int_{-\infty}^{\infty}(c+x^2)P_n(x)\frac{
d\mu(x)}{c+x^2}\\
    &=c\int_{-\infty}^{\infty}P_n(x)\frac{
d\mu(x)}{c+x^2}+\int_{-\infty}^{\infty}x^2P_n(x)\frac{
d\mu(x)}{1+x^2}.
\end{align*}

Hence, 
by assumption, we have 
\[\int_{-\infty}^{\infty}x^2P_n(x)\frac{
d\mu(x)}{c+x^2}=0.\]
Now let $2 \leq k <n$, and assume that $\int_{-\infty}^{\infty}x^mP_n(x)\frac{
d\mu(x)}{c+x^2}=0$ for all $m$ even, $m\leq k$. Then
\begin{align*}
\int_{-\infty}^{\infty}(c+x^2)P_kP_n(x)\frac{
d\mu(x)}{c+x^2}&=\int_{-\infty}^{\infty}P_kP_n(x)d\mu(x)\\
&=0
\end{align*} by the orthogonality of the $P_n(x)$. But note that since $k$ is even, $P_k(x)=x^k+a_{k-2}x^{k-2}+\dots+a_2x^2+a_0$, hence \[(1+x^2)P_k(x)=x^{k+2}+b_{k}x^{k}+\dots+b_2x^2+b_0\] Thus,
\begin{align*}
\int_{-\infty}^{\infty}(c+x^2)P_kP_n(x)\frac{
d\mu(x)}{c+x^2}&=\int_{-\infty}^{\infty}\left(x^{k+2}+b_{k}x^{k}+\dots+b_0\right)P_n(x)\frac{
d\mu(x)}{c+x^2}\\
&=\int_{-\infty}^{\infty}x^{k+2}P_n(x)\frac{
d\mu(x)}{c+x^2}
\end{align*} 
and hence 
\[
\int_{-\infty}^{\infty}x^{k+2}P_n(x)\frac{
d\mu(x)}{c+x^2}=0.
\]Thus, by induction, \eqref{eq:induction} holds for all $k$ even, $k\leq n$. In particular,
\[
\int_{-\infty}^{\infty}P_n^2(x)\frac{
d\mu(x)}{c+x^2}=0.
\]Note that the integrand is non-negative for all $x\in$ $(-\infty, \infty)$, thus we must have that $P_n^2(x)\equiv 0$ $\mu$-almost everywhere, which is a contradiction.

We now consider the case $n$ is odd and suppose there exists $n\geq 3$ such that $\int_{-\infty}^{\infty}R_3(x,c)P_n(x)\frac{d\mu(x)}{c+x^2}=0$. Notice that
\begin{align*}
    \int_{-\infty}^{\infty} R_3(x,c)P_n(x)\frac{d\mu(x)}{c+x^2}&=\int_{-\infty}^{\infty}P_3(x)P_n(x)\frac{d\mu(x)}{c+x^2}+c\int_{-\infty}^{\infty} P_1(x)P_n(x) \frac{d\mu(x)}{c+x^2}\\
    &=0
\end{align*} Hence
\begin{equation}\label{eq:odd1}
c\int_{-\infty}^{\infty} P_1(x)P_n(x) \frac{d\mu(x)}{1+x^2} = -\int_{-\infty}^{\infty}P_3(x)P_n(x)\frac{d\mu(x)}{1+x^2}.
\end{equation}Also, by the orthogonality of the $P_n(x)$,
\begin{align*}
   0&= \int_{-\infty}^{\infty} P_1(x)P_n(x) d\mu(x)\\
   &=\int_{-\infty}^{\infty}(c+x^2)P_1(x)P_n(x)\frac{d\mu(x)}{c+x^2}\\
   &=c\int_{-\infty}^{\infty} P_1(x)P_n(x) \frac{d\mu(x)}{c+x^2}+\int_{-\infty}^{\infty}x^2P_1(x)P_n(x)\frac{d\mu(x)}{c+x^2}\\
   &=c\int_{-\infty}^{\infty} P_1(x)P_n(x) \frac{d\mu(x)}{c+x^2}+\int x^3P_n(x)\frac{d\mu(x)}{c+x^2}
\end{align*} which implies
\begin{equation}\label{eq:odd2}
c\int_{-\infty}^{\infty} P_1(x)P_n(x)\frac{d\mu(x)}{c+x^2}=-\int_{-\infty}^{\infty}x^3P_n(x)\frac{d\mu(x)}{c+x^2}
\end{equation}
Thus equations \eqref{eq:odd1} and \eqref{eq:odd2} give
\[
\int_{-\infty}^{\infty} P_3(x)P_n(x)\frac{d\mu(x)}{c+x^2}=\int_{-\infty}^{\infty}x^3P_n(x)\frac{d\mu(x)}{c+x^2}.
\]But recall from Proposition \ref{prop:p3} that $P_3(x)=x^3+ax$ where $a\neq 0$, hence 
\begin{align*}
\int_{-\infty}^{\infty} (x^3+ax)P_n(x)\frac{d\mu(x)}{c+x^2}&=\int_{-\infty}^{\infty} x^3P_n(x)\frac{d\mu(x)}{c+x^2}
\end{align*} which implies that 
\begin{equation*}
  \int_{-\infty}^{\infty} xP_n(x)\frac{d\mu(x)}{c+x^2}=0.
\end{equation*}

Now we proceed by induction. Let $k$ be odd, $1\leq k <n$ and suppose that $\int_{-\infty}^{\infty}x^mP_n(x)\frac{d\mu(x)}{c+x^2}=0$ for all $1\leq m\leq k$. Then 
\[
\int_{-\infty}^{\infty}(c+x^2)P_k(x)P_n(x)\frac{d\mu(x)}{c+x^2}=0
\] by the orthogonality of the $P_n(x)$. Notice that $(c+x^2)P_k(x)$ is a monic polynomial of degree $k+2$ with only odd moments since $P_k(x)$ is odd, thus 
\begin{align*}
    0&=\int_{-\infty}^{\infty} (c+x^2)P_k(x)P_n(x)\frac{d\mu(x)}{c+x^2}\\
    &=\int_{-\infty}^{\infty}x^{k+2}P_n(x)\frac{d\mu(x)}{c+x^2}
\end{align*} hence $\int_{-\infty}^{\infty} x^k P_n(x)\frac{d\mu(x)}{c+x^2} =0$ for all odd $k \leq n$. As before, this implies 
\[
\int_{-\infty}^{\infty} P^2_n(x)\frac{d\mu(x)}{c+x^2} =0
\] which is a contradiction.
If the family $\{R_n(x,c)\}_{n\in \mathbb{N}\setminus\{1,2\}}$ exists, then it must be orthogonal. To prove this, we will need the following lemma.
\end{proof}

\begin{lemma}\label{lemma:ratio} Let $P_n(x)$ be a monic OPS with respect to $\mu(x)$ and let $\phi(x)=c+x^2$, $c>0$. Then if $k \leq n$,
\begin{equation}\label{eq:ratio}
\int_{-\infty}^{\infty} x^kP_{n}(x)\frac{d\mu(x)}{\phi(x)}=\begin{cases}\displaystyle (-c)^{\frac{k}{2}}\int_{-\infty}^{\infty}P_n(x)\frac{d\mu(x)}{\phi(x)}& \text{ for $k$ and $n$ even,}\\
\displaystyle (-c)^{\frac{k-1}{2}}\int_{-\infty}^{\infty} xP_n(x)\frac{d\mu(x)}{\phi(x)} &\text{ for $k$ and $n$ odd}.
\end{cases}
\end{equation} 
\end{lemma}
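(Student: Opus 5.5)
We prove \eqref{eq:ratio} by induction on $k$ within each parity class, using only the orthogonality of $P_n$ against polynomials of lower degree. Write
\[I_k:=\int_{-\infty}^{\infty} x^kP_n(x)\frac{d\mu(x)}{\phi(x)}.\]
The key identity is that for any $j\ge 0$ with $j+2\le n$ (so $j<n$),
\[
cI_j+I_{j+2}=\int_{-\infty}^{\infty} x^j(c+x^2)P_n(x)\frac{d\mu(x)}{\phi(x)}=\int_{-\infty}^{\infty} x^jP_n(x)\,d\mu(x)=0,
\]
since $x^j$ has degree $j<n$ and $P_n$ is orthogonal to all lower-degree polynomials by Proposition \ref{prop:OPSequiv}. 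This gives the two-step recursion $I_{j+2}=-cI_j$, valid whenever $j+2\le n$.

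For $k$ and $n$ even, the base case $k=0$ is the trivial statement $I_0=(-c)^0I_0$. If $I_k=(-c)^{k/2}I_0$ and $k+2\le n$, then taking $j=k$ in the recursion gives $I_{k+2}=-cI_k=(-c)^{(k+2)/2}I_0$. This covers every even $k\le n$.

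For $k$ and $n$ odd, the base case is $k=1$, which reads $I_1=(-c)^0I_1$ and is trivial. The inductive step is identical: $I_{k+2}=-cI_k=(-c)^{(k+1)/2}I_1$ for $k+2\le n$, which is exactly the claimed exponent $\frac{(k+2)-1}{2}$. This covers every odd $k\le n$.

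The only point needing care is the range of validity of the recursion. Deriving $I_{k}$ uses orthogonality against $x^{k-2}$, so we need $k-2<n$, which holds because $k\le n$. (In fact the recursion stays valid up to $k=n+1$, but the lemma only asks for $k\le n$.) No symmetry of $\mu$ is needed for the recursion itself. The parity restrictions in the statement simply select which base case, $I_0$ or $I_1$, the chain reduces to. The mixed-parity cases are omitted from the statement; for a symmetric $\mu$ they vanish by the parity of $P_n$ given in Proposition \ref{prop:symmetric}, and they are not needed here.
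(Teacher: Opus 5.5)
Your proof is correct and is essentially the paper's argument: both rest on the identity $cI_j+I_{j+2}=\int_{-\infty}^{\infty}x^jP_n(x)\,d\mu(x)=0$ for $j<n$, iterated within a single parity class. The differences are cosmetic. You start the induction at the trivial cases $k=0$ and $k=1$ rather than at $k=2$, and you write out the odd case that the paper leaves as ``similar.'' You also correctly observe that symmetry of $\mu$ plays no role in the recursion itself.
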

\begin{proof} Consider the case $k$ and $n$ are even. Let $k=2$. Then
\begin{align*}
    \int_{-\infty}^{\infty}(c+x^2)P_n(x)\frac{d\mu(x)}{c+x^2}&=\int_{-\infty}^{\infty}P_n(x)d\mu(x)\\
    c\int_{-\infty}^{\infty}P_n(x)\frac{d\mu(x)}{c+x^2}+\int_{-\infty}^{\infty} x^2 P_n(x)\frac{d\mu(x)}{c+x^2}&=0\\
   \int_{-\infty}^{\infty} x^2P_n(x)\frac{d\mu(x)}{c+x^2}&=-c\int_{-\infty}^{\infty}P_n(x)\frac{d\mu(x)}{c+x^2}
    \end{align*} for $n\geq 2$ by the orthogonality of the $P_n(x)$. Now, assume \eqref{eq:ratio} holds for all even $k \leq n-2$. Then 
    \begin{align*}
        \int_{-\infty}^{\infty} (c+x^2)x^k P_n(x)\frac{d\mu(x)}{c+x^2}&=\int_{-\infty}^{\infty} x^k P_n(x) d\mu(x)\\
        c\int_{-\infty}^{\infty} x^k P_n(x) \frac{d\mu(x)}{c+x^2}+\int_{-\infty}^{\infty} x^{k+2}P_n(x)\frac{d\mu(x)}{c+x^2}&=0 \\\int_{-\infty}^{\infty} x^{k+2}P_n(x) \frac{d\mu(x)}{c+x^2}
       &=(-c)^{\frac{k+2}{2}}\int_{-\infty}^{\infty}P_n(x)\frac{d\mu(x)}{c+x^2} 
    \end{align*}where the last equality follows by assumption. Thus, by induction, \eqref{eq:ratio} holds for all $k\leq n$, $k$ and $n$ even.
   The case for $k$ and $n$ odd follows similarly.
\end{proof}

\begin{theorem}\label{thrm:orthog_1} Let $\{P_n(x)\}_{n=0}^{\infty}$ be a sequence of orthogonal polynomials with respect to a symmetric measure $\mu(x)$ and let $\{R_n(x,c)\}_{n\in \mathbb{N}\setminus\{1,2\}}$ be defined as in \eqref{eq:Rn_2term}. 
Define  
\[R_2(x,c):=x^2-A_0(c)\] where \[A_0(c):=\frac{\displaystyle\int_{-\infty}^{\infty} x^2 \frac{d\mu(x)}{c+x^2}}{\displaystyle\int_{-\infty}^{\infty}\frac{d\mu(x)}{c+x^2}}.\] 
Then $\{R_n(x,c)\}_{n\in \mathbb{N}\setminus\{1\}}$ is orthogonal with respect to $\frac{d\mu(x)}{c+x^2}$ i.e. 
\[
\int_{-\infty}^{\infty} R_n(x,c)R_m(x,c)\frac{d\mu(x)}{c+x^2}=\mathcal{K}_n\delta_{n,m}, \quad n,m =0,2,3,4,5,\dots
\]for some constants $\mathcal{K}_n$.
\end{theorem}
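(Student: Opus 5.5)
We have to show that every pair $R_n,R_m$ with $n\neq m$ in $\{0,2,3,4,\dots\}$ is orthogonal with respect to $\frac{d\mu(x)}{c+x^2}$. Write $\langle f,g\rangle:=\int_{-\infty}^{\infty} f(x)g(x)\frac{d\mu(x)}{c+x^2}$.

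\emph{Parity.} By Proposition \ref{prop:symmetric}, each $P_n$ has the parity of $n$. Since $R_n=P_n+A_{n-2}P_{n-2}$ and $R_2=x^2-A_0(c)$, each $R_n$ also has the parity of $n$. Because $\frac{d\mu(x)}{c+x^2}$ is symmetric, $\langle R_n,R_m\rangle=0$ whenever $n\not\equiv m \pmod 2$. So we may assume $n>m$ with $n\equiv m\pmod 2$. The coefficients $A_{n-2}$ are well defined by Theorem \ref{thrm:an_1}.

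\emph{Reduction to one test polynomial.} Apply Lemma \ref{lemma:ratio} to $P_n$ and to $P_{n-2}$. Let $p$ be any polynomial of degree at most $n-2$ with the same parity as $n$.
\begin{itemize}
\item For $n$ even, the lemma gives $\langle x^k,P_n\rangle=(-c)^{k/2}\langle 1,P_n\rangle$ and the analogous identity for $P_{n-2}$ (even $k\le n-2$). Hence $\langle p,P_n\rangle=p(i\sqrt c)\langle 1,P_n\rangle$ and $\langle p,P_{n-2}\rangle=p(i\sqrt c)\langle 1,P_{n-2}\rangle$, using $(i\sqrt c)^k=(-c)^{k/2}$.
\item For $n$ odd the same argument gives $\langle p,P_j\rangle=\frac{p(i\sqrt c)}{i\sqrt c}\,\langle x,P_j\rangle$ for $j=n,n-2$.
\end{itemize}
Consequently $\langle p,R_n\rangle=\lambda(p)\,\langle q,R_n\rangle$, where $q=1$ for $n$ even, $q=x$ for $n$ odd, and $\lambda(p)$ is a scalar depending only on $p$. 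So orthogonality of $R_n$ to all such $p$ reduces to a single condition: $\langle 1,R_n\rangle=0$ for even $n$, and $\langle x,R_n\rangle=0$ for odd $n$.

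\emph{Even case.} Here $\langle 1,R_n\rangle=0$ is exactly condition \eqref{cond:1}, which is how $A_{n-2}$ was defined (for $n=2$, the choice of $A_0(c)$ does the same job). Since $R_m$ with $m<n$ even has degree at most $n-2$, the reduction gives $\langle R_m,R_n\rangle=0$.

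\emph{Odd case.} This is the main obstacle, because condition \eqref{cond:2} is imposed against $R_3$ rather than against $x$. Since $R_3$ is odd of degree $3\le n-2$ (for $n\ge5$), the reduction gives
\[
\langle R_3,R_n\rangle=\lambda(R_3)\,\langle x,R_n\rangle,\qquad \lambda(R_3)=\frac{R_3(i\sqrt c)}{i\sqrt c}.
\]
Now $R_3=x^3+ax+cx$, so $\lambda(R_3)=(i\sqrt c)^2+a+c=a$, where $a=-(\lambda_2+\lambda_3)\neq0$ by Proposition \ref{prop:p3}. Condition \eqref{cond:2} says $\langle R_3,R_n\rangle=0$, so $\langle x,R_n\rangle=0$. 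The reduction then gives $\langle p,R_n\rangle=0$ for every odd $p$ of degree at most $n-2$, in particular $\langle R_m,R_n\rangle=0$ for all odd $3\le m<n$.

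The pairs involving $n=3$ are already covered: $R_3$ is orthogonal to $R_0$ and $R_2$ by parity. The constants $\mathcal K_n:=\langle R_n,R_n\rangle$ need no further work, since the statement only asserts orthogonality.
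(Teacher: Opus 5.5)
Your proof is correct and is essentially the paper's argument in dual form. The paper writes $R_m-R_m(i\sqrt c)=(c+x^2)g_m$ in the even case and $R_m-L_m=(c+x^2)h_m$, with $L_m$ a multiple of $R_3$, in the odd case, then uses $\mu$-orthogonality of $P_n,P_{n-2}$ against $g_m,h_m$. You instead use Lemma \ref{lemma:ratio} to see that $\langle p,R_n\rangle$ depends only on $p(i\sqrt c)$, which is the same fact, and your step $\lambda(R_3)=a\neq 0$ plays the role of the paper's division by $a$ in $L_m$. The paper additionally records $\mathcal K_n>0$ from positivity of $R_n^2/(c+x^2)$ on the support of $\mu$; the statement as written does not require this.
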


\begin{proof} Denote $R_n(x,c)=R_n(x)$. First, if $n$ is even then
$\int_{-\infty}^{\infty} x^2 R_n(x)\frac{d\mu(x)}{c+x^2}=0$ by Lemma \ref{lemma:ratio}. Thus, by \eqref{cond:1} and since $\mu(x)$ is even,  we have $\int_{-\infty}^{\infty}R_2(x)R_n(x)\frac{d\mu(x)}{c+x^2}=0$ for all $n\geq 2$. The fact that $R_2(x)$ is orthogonal to $R_0(x)$ follows by the definition of $A(c)$, hence $R_2(x)$ is orthogonal to $R_n(x)$ for all $n\in \mathbb{N}\setminus\{1\}$. Now, let $n \geq 3$. First, note that by Theorem \ref{thrm:an_1}, $R_n(x)$ exists for all $n\in \mathbb{N}\setminus\{1,2\}$. Now, consider the case that $n$ and $m$ are both even $n,m \geq 4$, and let $G_m(x)=R_m(x)-R_m(\sqrt{c}i)$. Then $G_m(\sqrt{c}i)=0$ and $G_m(-\sqrt{c}i)=R_m(-\sqrt{c}i)-R_m(\sqrt{c}i)=0$ since $R_m(x)$ is even. Thus, $G_m(x)=(c+x^2)g_m(x)$ where $\deg g_m(x)=m-2$. Then 
\begin{align*}
   \int_{-\infty}^{\infty} G_m(x)R_n(x)\frac{d\mu(x)}{c+x^2}&=\int_{-\infty}^{\infty} g_m(x)R_n(x)d\mu(x)\\
    &=\int_{-\infty}^{\infty}  g_m(x)P_{n}(x)d\mu(x)+A_{n-2}\int_{-\infty}^{\infty} g_m(x)P_{n-2}(x)d\mu(x)\\
    &=0 \text{ for $m<n$.}
\end{align*} But,
\begin{align*}
    \int_{-\infty}^{\infty} G_m(x)R_n(x)\frac{d\mu(x)}{c+x^2}&=\int_{-\infty}^{\infty}  R_m(x)R_n(x)\frac{d\mu(x)}{c+x^2}+R_m(\sqrt{c}i)\int_{-\infty}^{\infty}  R_n(x)\frac{d\mu(x)}{c+x^2}\\
    &=\int_{-\infty}^{\infty}  R_m(x)R_n(x)\frac{d\mu(x)}{c+x^2} \end{align*}
by condition \ref{cond:1}, hence $\int_{-\infty}^{\infty}  R_m(x)R_n(x)\frac{d\mu(x)}{c+x^2} =0$ for $n,m \geq 4$ even,  $m \neq n$.

Now let $n$ and $m$ both be odd, $n,m \geq 5$, and define $L_m(x):=\frac{R_m(\sqrt{c}i)}{a\sqrt{c}i}\left(x^3+(a+c)x\right)$ 
where $a$ is the nonzero coefficient of $x$ in $P_3(x)=x^3+ax$. For $n \geq 1$, let $H_n(x):=R_n(x)-L_n(x)$. Then one can check that $H_n(\sqrt{c}i)=H_n(-\sqrt{c}i)=0$ and thus $H_n(x)=(c+x^2)h_n(x)$ where $\deg h_n(x)=n-2$. Thus,
\begin{align*}
    \int_{-\infty}^{\infty}  H_m(x)R_n(x)\frac{d\mu(x)}{c+x^2}&=\int_{-\infty}^{\infty} h_m(x)R_n(x)d\mu(x)\\
    &=\int_{-\infty}^{\infty}  h_m(x)P_{n}(x)d\mu(x)+A_{n-2}\int_{-\infty}^{\infty} h_m(x)P_{n-2}(x)d\mu(x)\\
    &=0 \text{ for }m<n.
\end{align*}
But 
\begin{align*}
    \int_{-\infty}^{\infty}  H_m(x)R_n(x)\frac{d\mu(x)}{c+x^2}&=\int_{-\infty}^{\infty}  R_m(x)R_n(x) \frac{d\mu(x)}{c+x^2}-\int_{-\infty}^{\infty}  L_m(x)R_n(x) \frac{d\mu(x)}{c+x^2}.
\end{align*}Looking at the right-most integral, we see that
\begin{align*}
    \int_{-\infty}^{\infty}  L_m(x)R_n(x) \frac{d\mu(x)}{c+x^2}&=\frac{R_m(\sqrt{c}i)}{a\sqrt{c}i}\int_{-\infty}^{\infty}(x^3+(a+c)x)R_n(x) \frac{d\mu(x)}{c+x^2}\\
    &=\frac{R_m(\sqrt{c}i)}{a\sqrt{c}i}\int_{-\infty}^{\infty}R_3(x)R_n(x) \frac{d\mu(x)}{c+x^2}\\
    &=0 \text{ for $n \geq 5$},
\end{align*} where in the second to last line, we used the fact that $R_3(x)=x^3+(a+c)x$. Thus, $\int_{-\infty}^{\infty}  R_m(x)R_n(x) \frac{d\mu(x)}{c+x^2}=0$ for $m, n$ odd, $m \neq n$.


The case that $n$ is odd and $m$ is even or vice versa follows from the symmetry of $\mu(x)$ and thus $\int_{-\infty}^{\infty} R_n(x)R_m(x)\frac{d\mu(x)}{c+x^2}=0$ for all $n \neq m$. Now if $n=m$ then 
\[
\int_{-\infty}^{\infty} R^2_n(x)\frac{d\mu(x)}{c+x^2}
\neq 0\] since $\frac{R^2_n(x)}{c+x^2}$ is positive on the support of $\mu(x)$, hence $\{R_n(x)\}_{n\in \mathbb{N}\setminus\{1,2\}}$ is orthogonal with respect to $\frac{d\mu(x)}{c+x^2}$.
\end{proof}

\remark{We remark here that in general, the polynomials $R_n(x,c)$ are not classically orthogonal, as was discussed in the beginning of Section \ref{sec:order 1} and will be illustrated in Section \ref{subsec:cheb1}.}

\subsection{DEK-Type Chebyshev Polynomials}\label{subsec:cheb1}

Let $T_n(x)$ denote the $n$-th degree monic Chebyshev polynomial of the first kind defined by 
\begin{align*}
T_0(x)&=1\\ T_n(\cos\theta)&=\frac{1}{2^{n-1}}\cos(n\theta)\quad n\geq 1. 
\end{align*}
The Chebyshev polynomials of the first kind are orthogonal with respect to $\displaystyle \frac{dx}{\sqrt{1-x^2}}$ and satisfy the following three-term recurrence relation:
\[
T_{n+1}(x)=xT_n(x)-\frac{1}{4}T_{n-1}(x), n\geq 2, \quad T_0(x)=1, T_1(x)=x.
\]

Then we first note that $-\frac{[P_3,x]_{\phi}}{[P_{1},x]_{\phi}}=\frac{3}{4}-\frac{\sqrt{2}}{2}$, thus if $c=1$, we know by Theorem \ref{thrm:an_1} and Theorem \ref{thrm:orthog_1} that there exist polynomials  $\{\mathcal{T}_n(x)\}_{n\in\mathbb{N}\setminus\{1\}}$ satisfying \eqref{eq:Rn_2term}, \eqref{cond:1} and \eqref{cond:2}, which are orthogonal with respect to $\displaystyle\frac{dx}{\sqrt{1-x^2}(1+x^2)}$. We list the first 6
$\mathcal{T}_n(x)$'s below.

\begin{align*}
\mathcal{T}_0(x)&=1\\
\mathcal{T}_2(x)&=x^2+1-\sqrt{2}\\
\mathcal{T}_3(x)&=x^3+\frac{1}{4}x\\
\mathcal{T}_4(x)&=x^4-\frac{1+2\sqrt{2}}{4}x^2+\frac{\sqrt{2}-1}{4}\\
\mathcal{T}_5(x)&=x^5-\frac{1+\sqrt{2}}{2}x^3+\frac{3\sqrt{2}-2}{8}x\\
\mathcal{T}_6(x)&=x^6-\frac{3+2\sqrt{2}}{4}x^4+\frac{8\sqrt{2}-3}{16}x^2+\frac{1-\sqrt{2}}{16}\\
\mathcal{T}_7(x)&=x^7-\frac{2+\sqrt{2}}{2}x^5+\frac{10\sqrt{2}-1}{16}x^3+\frac{4-5\sqrt{2}}{32}x
\end{align*}

\begin{remark}
    Notice that if the $\mathcal{T}_n(x)$ were orthogonal in the classical sense, then they would satisfy a recurrence relation of the form
    \[
    x\mathcal{T}_n(x)=\mathcal{T}_{n+1}(x)+a_n\mathcal{T}_n(x)+b_n\mathcal{T}_{n-1}(x).
    \] However, letting $n=2$ we see no such $b_2$ exists thus the $\mathcal{T}_n(x)$ are not classically orthogonal and the sequence does not contain a degree 2 polynomial.  
   

\end{remark}

Although we know from Theorem 
\ref{thrm:an_1} that the $A_n$'s in equation \eqref{eq:Rn_2term} exist, we can actually compute them explicitly for the DEK-type Chebyshev polynomials. 
\begin{proposition}\label{prop:An}
   Let $T_n(x)$ denote the $n$-th degree Chebyshev polynomial of the first kind. Then the $\mathcal{T}_n(x)$ satisfy $\mathcal{T}_n(x)=T_n(x)+A_{n-2}T_{n-2}(x)$ where $A_n=\frac{3}{4}-\frac{\sqrt{2}}{2}$ for all $n \geq 2$.
\end{proposition}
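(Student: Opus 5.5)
The plan is to evaluate the two ratios in \eqref{eq:an_1} in closed form for $P_n=T_n$, $\mu(x)=\frac{dx}{\sqrt{1-x^2}}$ and $c=1$. Both ratios should reduce to the same geometric ratio coming from the Fourier expansion of the weight $\frac{1}{1+x^2}$ in the variable $\theta$.

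\textbf{Odd case.} I would first reduce it to a one-factor integral. Using Lemma \ref{lemma:ratio} together with $R_3(x)=x^3+(a+c)x$, we get for odd $n\geq 3$
\[
\int_{-1}^{1}R_3(x)T_n(x)\frac{dx}{\sqrt{1-x^2}(1+x^2)}=\bigl((-c)+a+c\bigr)J_n=aJ_n,
\qquad J_n:=\int_{-1}^{1}xT_n(x)\frac{dx}{\sqrt{1-x^2}(1+x^2)}.
\]
Since $a\neq 0$ by Proposition \ref{prop:p3}, the odd-case formula becomes $A_n=-J_{n+2}/J_n$. For even $n$, write $I_n:=\int_{-1}^{1}T_n(x)\frac{dx}{\sqrt{1-x^2}(1+x^2)}$, so that $A_n=-I_{n+2}/I_n$.

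\textbf{Computing the moments.} Substitute $x=\cos\theta$, giving $T_n(\cos\theta)=2^{1-n}\cos n\theta$. Write
\[
\frac{1}{1+\cos^2\theta}=\frac{2}{3+\cos 2\theta}.
\]
Then apply the Poisson-kernel expansion
\[
\frac{1}{\alpha+\beta\cos\varphi}=\frac{1}{\sqrt{\alpha^2-\beta^2}}\Bigl(1+2\sum_{k\geq1}r^k\cos k\varphi\Bigr),\qquad r=\frac{\sqrt{\alpha^2-\beta^2}-\alpha}{\beta},
\]
with $\alpha=3$, $\beta=1$. This gives $r=2\sqrt2-3=-(3-2\sqrt2)$, and $|r|<1$, so the series converges uniformly.

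By orthogonality of the cosines on $[0,\pi]$,
\[
\int_0^\pi \frac{\cos(2m\theta)}{1+\cos^2\theta}\,d\theta=\frac{\pi}{\sqrt2}\,r^m, \qquad m\geq1.
\]
Hence $I_{2m}=2^{1-2m}\frac{\pi}{\sqrt2}r^m$, and so $I_{n+2}/I_n=r/4$ for even $n\geq2$.

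For odd $n$, use $\cos\theta\cos n\theta=\tfrac12\bigl(\cos(n+1)\theta+\cos(n-1)\theta\bigr)$. Both indices are even, so
\[
J_n=2^{-n}\frac{\pi}{\sqrt2}\,r^{(n-1)/2}(1+r).
\]
Because $1+r\neq0$, this again gives $J_{n+2}/J_n=r/4$.

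\textbf{Conclusion.} In both cases $A_n=-r/4=\frac{3-2\sqrt2}{4}=\frac34-\frac{\sqrt2}{2}$ for all $n\geq2$. The case $n=1$ is excluded because $A_1=c=1$ is fixed by definition.

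\textbf{Main obstacle.} The delicate point is the bookkeeping in the odd case. One has to justify replacing the $R_3$-weighted integrals by $J_n$; this needs Lemma \ref{lemma:ratio} with $k=3\leq n$ and $a\neq0$. One also has to check that the denominators $I_n$ and $J_n$ never vanish, which holds since $r\neq0$ and $1+r\neq0$. The remaining steps are routine Fourier computations.
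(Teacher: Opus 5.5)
Your proposal is correct, but it computes the key moments by a different route than the paper. The paper works in the $x$-variable. It splits $\frac{1}{1+x^2}$ into partial fractions with poles at $\pm i$ and applies the second-kind identity $\int_{-1}^{1}\frac{\overline{T}_n(x)-\overline{T}_n(y)}{(x-y)\sqrt{1-x^2}}\,dx=\pi U_{n-1}(y)$ at $y=\pm i$. It then evaluates $\overline{T}_n(i)$ and $U_{n-1}(i)$ from their closed forms in $x\pm\sqrt{x^2-1}$, obtaining $\int_{-1}^{1}\frac{\overline{T}_n(x)\,dx}{(1+x^2)\sqrt{1-x^2}}=\frac{\pi}{\sqrt2}\,i^n(1-\sqrt2)^n$ for even $n$. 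The odd case is only asserted to ``follow similarly.''

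You instead pass to $\theta$ and expand $\frac{1}{1+\cos^2\theta}$ as a Poisson kernel. All the moments then drop out at once as the geometric sequence $\frac{\pi}{\sqrt2}r^m$ with $r=2\sqrt2-3=\bigl(i(1-\sqrt2)\bigr)^2$, which is exactly the paper's value.

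Your route buys three things. It is more elementary: there is no complex evaluation, no branch choice for $\sqrt{x^2-1}$ at $x=i$, and no function of the second kind. It adapts immediately to any $c>0$ by replacing $3+\cos 2\theta$ with $2c+1+\cos 2\theta$. Most importantly, it actually carries out the odd case. Your reduction $\int R_3T_n\frac{d\mu}{1+x^2}=aJ_n$ via Lemma \ref{lemma:ratio} and $a\neq 0$, followed by the product-to-sum step, fills in what the paper leaves implicit; the paper uses this reduction only later, when comparing $t_{n+2}$ with $A_n$.

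The paper's Stieltjes-transform approach, on the other hand, generalizes to other classical families whose functions of the second kind are explicit. Yours relies on the Chebyshev weight becoming $d\theta$ under $x=\cos\theta$.
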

\begin{proof}
First, let $\overline{T}_n(x)=2^{n-1}T_n(x), \, n\geq 1, $ be the non-monic $n$-th degree Chebyshev polynomial of the first kind ($\overline{T}_0(x)=1)$. Then 
\[
\int_{-1}^1\frac{\overline{T}_n(x)-\overline{T}_n(y)}{(x-y)\sqrt{1-x^2}}dx=\pi U_{n-1}(y)
\] where $U_{n-1}(y)$ is the $n-1$-th degree Chebyshev polynomial of the second kind. Thus,
\begin{equation}\label{eq:chebsecond}
\int_{-1}^1\frac{\overline{T}_n(x)}{(x-y)\sqrt{1-x^2}}dx=\pi U_{n-1}(x)+\overline{T}_n(y)\int_{-1}^1\frac{1}{(x-y)\sqrt{1-x^2}}dx
\end{equation}
Thus, by partial fraction decomposition we have
\begin{align*}
    \int_{-1}^1\frac{\overline{T}_n(x)}{(1+x^2)\sqrt{1-x^2}}dx&=\frac{1}{2i}\int_{-1}^1\frac{\overline{T}_n(x)}{(x-i)\sqrt{1-x^2}}dx-\frac{1}{2i}\int_{-1}^1\frac{\overline{T}_n(x)}{(x+i)\sqrt{1-x^2}}dx\\
    &=\frac{1}{2i}\left(\pi U_{n-1}(i)+\overline{T}_n(i)\int_{-1}^1\frac{1}{(x-i)\sqrt{1-x^2}}dx\right)-\\
    &\frac{1}{2i}\left(\pi U_{n-1}(-i)+\overline{T}_n(-i)\int_{-1}^1\frac{1}{(x+i)\sqrt{1-x^2}}dx\right)
\end{align*}
Assume $n$ is even. Then 
\begin{align*}
    \int_{-1}^1\frac{\overline{T}_n(x)}{(1+x^2)\sqrt{1-x^2}}dx&=\frac{\pi}{i} U_{n-1}(i) +\frac{\overline{T}_n(i)}{2i}\int_{-1}^1 \frac{2i}{(1+x^2)\sqrt{1-x^2}}dx\\
    &=\frac{\pi}{i}U_{n-1}(i)+\frac{\pi \overline{T}_n(i)}{\sqrt{2}}\\
    &=\pi\left(\frac{\overline{T}_n(i)}{\sqrt{2}}-i U_{n-1}(i)\right)
    \end{align*}
    Using the fact that $\displaystyle \overline{T}_n(x)=\frac{(x-\sqrt{x^2-1})^n+(x+\sqrt{x^2-1})^n}{2}$ and\newline
    $\displaystyle U_n(x)=\frac{(x+\sqrt{x^2-1})^n-(x-\sqrt{x^2-1})^n}{2\sqrt{x^2-1}}$ we have that
    \begin{align*}
   \pi\left(\frac{\overline{T}_n(i)}{\sqrt{2}}-i U_{n-1}(i)\right)&=\pi \left( \frac{(i-i\sqrt{2})^n+(i+i\sqrt{2})^n}{2\sqrt{2}}\right)-i\left(\frac{(i+i\sqrt{2})^n-(i-i\sqrt{2})^n}{2\sqrt{2}i}\right)\\
   &=\frac{\pi}{\sqrt{2}}i^n(1-\sqrt{2})^n
    \end{align*}

Thus, for $n\geq 2$ even, we have that
\begin{align*}
    A_n&=\frac{-\displaystyle\int_{-1}^{1}\frac{T_{n+2}(x)
}{(1+x^2)\sqrt{1-x^2}}}{ \displaystyle\int_{-1}^{1}\frac{T_{n}(x)}{(1+x^2)\sqrt{1-x^2}}}\\
&= \frac{-\frac{1}{2^{n+1}}\displaystyle \int_{-1}^{1}\frac{\overline{T}_{n+2}(x)
}{(1+x^2)\sqrt{1-x^2}}}{ \frac{1}{2^{n-1}}\displaystyle \int_{-1}^{1}\frac{\overline{T}_{n}(x)}{(1+x^2)\sqrt{1-x^2}}}\\
&=-\frac{1}{4}\frac{\frac{\pi}{\sqrt{2}}i^{n+2}(1-\sqrt{2})^{n+2}}{\frac{\pi}{\sqrt{2}}i^n(1-\sqrt{2})^n}\\
&=\frac{-i^2(1-\sqrt{2})^2}{4}\\
&=\frac{3}{4}-\frac{\sqrt{2}}{2}.
\end{align*}
The case when $n \geq 3$ is odd follows similarly. 
\end{proof}
Proposition \ref{prop:An} allows us to obtain the following 5-term recurrence relation for $n\geq 6$.

\begin{theorem}
The $\mathcal{T}_n(x)$ satisfy the following 5-term recurrence relation:
\[
(1+x^2)\mathcal{T}_{n}(x)=\mathcal{T}_{n+2}(x)+\frac{3}{2}\mathcal{T}_n(x)+\frac{1}{16}\mathcal{T}_{n-2}(x), \quad n\geq 6.
\]
\end{theorem}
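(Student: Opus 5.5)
The plan is to reduce the statement to a shift-invariant identity for the monic Chebyshev polynomials themselves. Then we transfer it to the $\mathcal{T}_n(x)$ by linearity, using Proposition \ref{prop:An}, which says that for $m\geq 4$ we have $\mathcal{T}_m(x)=T_m(x)+A\,T_{m-2}(x)$ with the \emph{constant} $A=\frac{3}{4}-\frac{\sqrt{2}}{2}$.

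First I will iterate the three-term recurrence $xT_k(x)=T_{k+1}(x)+\frac14T_{k-1}(x)$. This holds for $k\geq 2$; for $k=1$ the normalization differs, since $xT_1=T_2+\frac12T_0$. Applying it twice gives, for $k\geq 3$ (so that both $k$ and $k-1$ are at least $2$),
\[
x^2T_k(x)=xT_{k+1}(x)+\tfrac14 xT_{k-1}(x)=T_{k+2}(x)+\tfrac12T_k(x)+\tfrac1{16}T_{k-2}(x).
\]
Adding $T_k(x)$ to both sides yields
\[
(1+x^2)T_k(x)=T_{k+2}(x)+\tfrac32T_k(x)+\tfrac1{16}T_{k-2}(x),\qquad k\geq 3 .
\]

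Next, take $n\geq 6$ and write $\mathcal{T}_n=T_n+AT_{n-2}$. Both indices $n$ and $n-2$ are at least $3$, so the identity above applies to each term. Regrouping gives
\[
(1+x^2)\mathcal{T}_n=\bigl(T_{n+2}+AT_n\bigr)+\tfrac32\bigl(T_n+AT_{n-2}\bigr)+\tfrac1{16}\bigl(T_{n-2}+AT_{n-4}\bigr).
\]
Each bracket is then identified with $\mathcal{T}_{n+2}$, $\mathcal{T}_n$ and $\mathcal{T}_{n-2}$ respectively, via Proposition \ref{prop:An}. The only point to check is that the same constant $A$ appears in all three brackets. This requires the index $n-2\geq 4$, because $\mathcal{T}_3=T_3+A_1T_1$ with $A_1=c=1\neq A$.

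The main (and really only) obstacle is this bookkeeping of indices. It is exactly what forces $n\geq 6$ rather than $n\geq 5$: for $n=5$ the last bracket would have to be $\mathcal{T}_3$, whose coefficient is $A_1=1$, not $A$. At lower indices the Chebyshev recurrence also fails to be shift-invariant because of the $T_1$ normalization. I would make these two restrictions explicit, and verify the result against the listed $\mathcal{T}_6$ and $\mathcal{T}_8$ if desired, though that is not needed for the proof.
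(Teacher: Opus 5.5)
Your argument is correct and is essentially the paper's proof: both expand $(1+x^2)\mathcal{T}_n$ by applying the monic Chebyshev recurrence $xT_k=T_{k+1}+\frac14T_{k-1}$ twice, then regroup using the constant coefficient $A=\frac34-\frac{\sqrt2}{2}$ from Proposition \ref{prop:An}. Your index bookkeeping is actually sharper than the paper's: the paper runs the computation for all $n\geq 5$, but as you observe, at $n=5$ the last bracket is $T_3+AT_1\neq\mathcal{T}_3=T_3+T_1$, so the identity genuinely requires $n\geq 6$ (apart from the separately checked case $n=4$).
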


\begin{proof}One may check explicitly that
\[
(1+x^2)\mathcal{T}_4(x)=\mathcal{T}_6(x)+\frac{3}{2}\mathcal{T}_4(x)+\frac{1}{16}\mathcal{T}_2(x).
\]
Now let $n\geq 5$. It follows from Proposition \ref{prop:An},
\[x\mathcal{T}_n(x)=xT_n(x)+\left(\frac{3}{4}-\frac{\sqrt{2}}{2}\right)xT_{n-2}(x)\] where $T_n(x)$ is the $n$-th degree monic Chebyshev polynomial of the first kind. Note that the monic Chebyshev polynomials satisfy
\[
xT_n(x)=T_{n+1}(x)+\frac{1}{4}T_{n-1}(x), \quad n\geq 2
\] thus we have
\begin{align*}
x\mathcal{T}_n(x)&=T_{n+1}(x)+\left(\frac{1}{4}+\frac{4}{3}-\frac{\sqrt{2}}{2}\right)T_{n-1}(x)+\frac{1}{4}\left(\frac{4}{3}-\frac{\sqrt{2}}{2}\right)T_{n-3}(x)
    \end{align*}
and 
\begin{align*}
x^2\mathcal{T}_n(x)=T_{n+2}(x)+\left(\frac{1}{2}+\frac{3}{4}-\frac{\sqrt{2}}{2}\right)T_n(x)+\left(\frac{7}{16}-\frac{\sqrt{2}}{4}\right)&T_{n-2}(x)\\
&
+\frac{1}{16}\left(\frac{3}{4}-\frac{\sqrt{2}}{2}\right)T_{n-4}(x).
\end{align*}
Thus, 
\begin{align*}
    x^2\mathcal{T}_n(x)+\mathcal{T}_n(x)&=T_{n+2}(x)+\left(\frac{3}{4}-\frac{\sqrt{2}}{2}+\frac{3}{2}\right)T_n(x)\\
    &+\left(\frac{19}{16}-\frac{3\sqrt{2}}{4} \right)T_{n-2}(x)+\frac{1}{16}\left(\frac{3}{4}-\frac{\sqrt{2}}{2}\right)T_{n-4}(x)\\
    &=T_{n+2}(x)+\left(\frac{3}{4}-\frac{\sqrt{2}}{2}\right)T_n(x)+\frac{3}{2}\left(T_n(x)+\left(\frac{3}{4}-\frac{\sqrt{2}}{2}\right)T_{n-2}(x)\right)\\&+\frac{1}{16}\left(T_{n-2}(x)+\left(\frac{3}{4}-\frac{\sqrt{2}}{2}\right)T_{n-4}(x)\right)\\
    &=\mathcal{T}_{n+2}(x)+\frac{3}{2}\mathcal{T}_n(x)+\frac{1}{16}\mathcal{T}_{n-2}(x).
\end{align*}
Hence,
\[
(1+x^2)\mathcal{T}_n(x)=\mathcal{T}_{n+2}(x)+\frac{3}{2}\mathcal{T}_n(x)+\frac{1}{16}\mathcal{T}_{n-2}(x)
\]
\end{proof}

Below, we plot the first 5 $\mathcal{T}_n(x)$.

\begin{figure}[h!]
    \centering
    \includegraphics[width=0.8\textwidth]{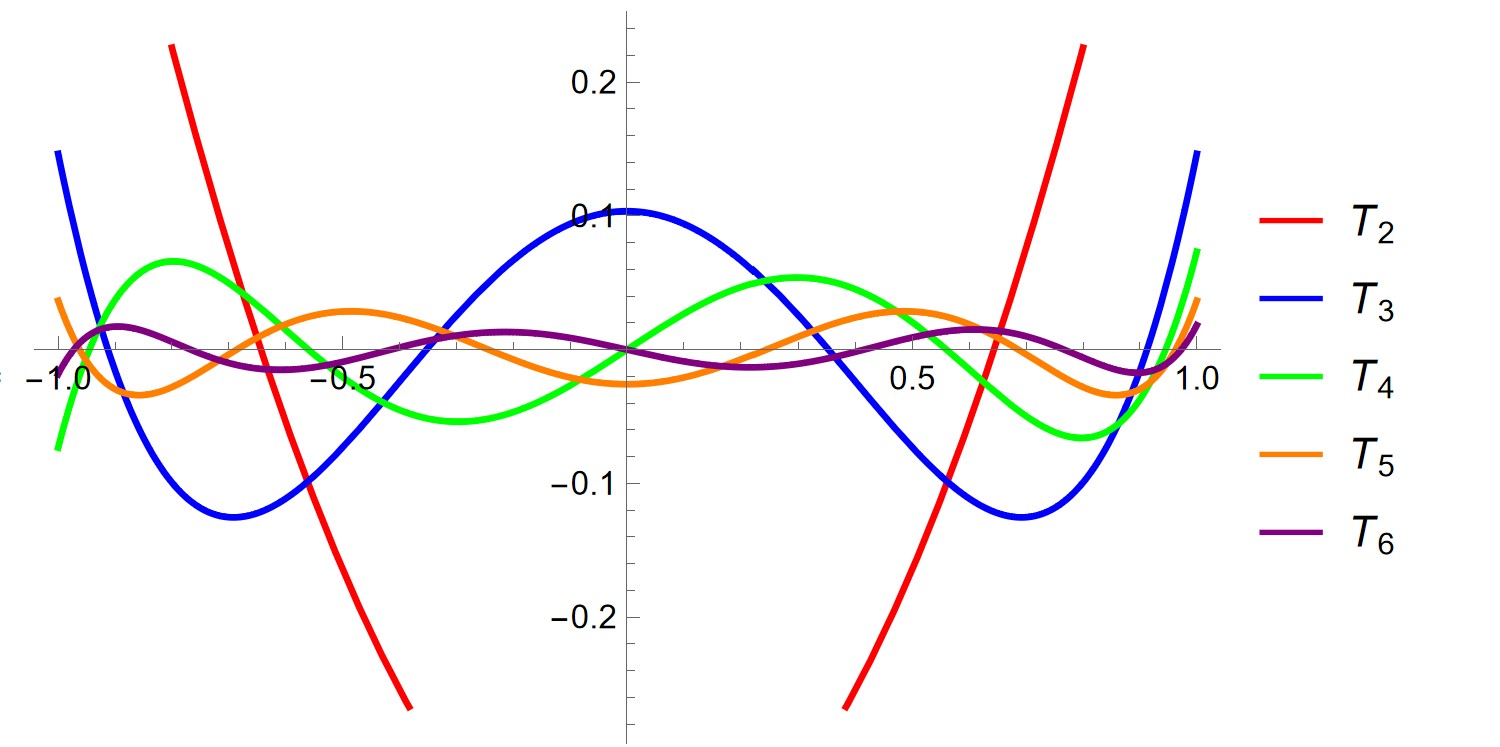}
    \caption{The $\mathcal{T}_n(x)$ for $n=2,3,4,5,6$.}
    \label{fig:plottn}
\end{figure}

\section{Relation to Quasi-Orthogonal Polynomials of Order 2}\label{sec:quasi}
In \cite{Marcellan2011}, the authors characterize the orthogonality of quasi-orthogonal polynomial sequences $\{Q_n(x)\}_{n=0}^{\infty}$ of order 2 defined by 
\[
Q_n(x):=P_n(x)+s_nP_{n-1}(x)+t_nP_{n-2}(x), 
\] with $t_0=t_1=0,$ and $ t_n\neq 0$ for $n\geq 2$. When $\mu(x)$ is symmetric on $\mathbb{R}$, this reduces to $s_n=0$ for all $n$.
In this case, Theorem 2.2 of \cite{Marcellan2011} states that $\{Q_n(x)\}_{n=0}^{\infty}$ is a monic OPS with respect to $\frac{d\mu(x)}{c+x^2}$ if and only if there exists a complex number $c$ such that, for $n \geq 1$ 
\begin{equation}\label{eq:bn}
B_n:=\frac{1}{t_{n+1}}\left(\lambda_{n+2}-t_{n+2} \right)\left(\lambda_{n+1}+t_n-t_{n+1}\right)+t_n-\lambda_{n}=c\end{equation} where
$\{\lambda_n\}_{n=1}^{\infty}$ is the sequence of (real) recurrence coefficients for $\{P_n(x)\}_{n=0}^{\infty}$ in  equation \ref{eq:recrel}. We remark here that in this case, the $Q_n(x)$ are the polynomials formed via Gram–Schmidt with respect to $\frac{d\mu(x)}{c+x^2}.$

The authors also discuss the symmetric case, and show that if $\{Q_n(x)\}_{n=0}^{\infty}$ is an OPS, then the coefficients $t_n$ can be computed in terms of the coefficients of $t_2$, $t_3$ and the three-term recurrence relation for $\{P_n(x)\}_{n=0}^{\infty}$. They state that if $\{Q_n(x)\}_{n=0}^{\infty}$ is an OPS, then the coefficients satisfy \eqref{eq:bn}, and the orthogonality measure is $\frac{d\mu(x)}{x^2+c}$. Below, we show provide explicit formula for the coefficients $t_n$ in terms of the polynomials $P_n(x)$ and show these coefficients always satisfy \eqref{eq:bn}.

\begin{theorem}\label{thrm:quasicoef}
    Let $\{P_n(x)\}_{n=0}^{\infty}$ be a monic OPS with respect to a symmetric measure $\mu(x)$ supported on a symmetric, infinite subset of $\mathbb{R}$. Then the quasi-orthogonal polynomials $Q_n(x)=P_n(x)+t_{n}P_{n-2}(x)$ form a monic OPS with respect to $\frac{d\mu(x)}{c+x^2}$ if and only if for all $n\geq 2$,  
\[
t_n=\begin{cases}\frac{-\int_{-\infty}^{\infty}P_{n}(x)\frac{
d\mu(x)}{c+x^2}}{ \int_{-\infty}^{\infty}P_{n-2}(x)\frac{
d\mu(x)}{c+x^2}},& \text{ $n$ even}\\
\frac{ -\int_{-\infty}^{\infty}xP_{n}(x)\frac{
d\mu(x)}{c+x^2}}{ \int_{-\infty}^{\infty}xP_{n-2}(x)\frac{
d\mu(x)}{c+x^2}},&\text{ $n$ odd}
\end{cases}
\] and $t_0=t_1=0$.
\end{theorem}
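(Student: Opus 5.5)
The plan is to prove both directions of the equivalence using the structure already developed in Section \ref{sec:order 1}. The key is that, by parity, only a single moment condition must be checked for each $Q_n$. I would follow the proof of Theorem \ref{thrm:orthog_1}, now working with the full sequence including degree $1$.

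\emph{Necessity.} Suppose $\{Q_n\}$ is a monic OPS with respect to $\frac{d\mu(x)}{c+x^2}$. Since $\mu$ is symmetric and $c>0$, the new measure is symmetric too. By Proposition \ref{prop:symmetric}, $Q_n$ has the parity of $n$, which is consistent with $Q_n=P_n+t_nP_{n-2}$. Orthogonality gives, for $n\ge 2$ even, $\int Q_n\,\frac{d\mu}{c+x^2}=0$, and for $n\ge 3$ odd, $\int xQ_n\,\frac{d\mu}{c+x^2}=0$, since $Q_0=1$ and $Q_1=x$. Expanding these conditions yields exactly the stated formulas, provided the denominators are nonzero. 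Nonvanishing of $\int P_{n-2}\frac{d\mu}{c+x^2}$ for $n-2$ even follows verbatim from the even-case argument in Theorem \ref{thrm:an_1}: if it vanished, the induction there would give $\int P_{n-2}^2\frac{d\mu}{c+x^2}=0$. For $n-2$ odd, the vanishing of $\int xP_{n-2}\frac{d\mu}{c+x^2}$ feeds directly into the odd-case induction of that same proof. The induction is driven by $(c+x^2)P_k$ being odd of degree $k+2$, so it gives all odd moments up to $n-2$ vanishing and hence $\int P_{n-2}^2\frac{d\mu}{c+x^2}=0$, a contradiction. (For $n-2=1$ the denominator is $\int x^2\frac{d\mu}{c+x^2}>0$ trivially.) The conditions $t_0=t_1=0$ are the normalization $Q_0=P_0$, $Q_1=P_1$.

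\emph{Sufficiency.} Define $t_n$ by the formulas, which is legitimate since the denominators are nonzero. By construction, $Q_n$ is orthogonal to $1$ for $n$ even and to $x$ for $n$ odd. Odd/even pairs are orthogonal by symmetry. To show $\int x^kQ_n\frac{d\mu}{c+x^2}=0$ for $k<n$ of the same parity, I use Lemma \ref{lemma:ratio}. For $n$ even and $k$ even with $k\le n-2$, the lemma applies to both $P_n$ and $P_{n-2}$, giving
\[
\int x^kQ_n\frac{d\mu}{c+x^2}=(-c)^{k/2}\int Q_n\frac{d\mu}{c+x^2}=0.
\]
For $n$ odd and $k$ odd the same argument works, using the lemma's odd case. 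Then Proposition \ref{prop:OPSequiv}(3) gives orthogonality. Nondegeneracy follows because $\int Q_n^2\frac{d\mu}{c+x^2}>0$, as the measure is positive with infinite support.

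\emph{Main obstacle.} The delicate point is the odd case of Lemma \ref{lemma:ratio}, which the paper only states "follows similarly". Its base step must avoid using the $R_3$ trick. I would prove it directly: for $n$ odd and odd $k\le n-2$, $\int x^k(c+x^2)P_n\frac{d\mu}{c+x^2}=\int x^kP_n\,d\mu=0$, so $\int x^{k+2}P_n\frac{d\mu}{c+x^2}=-c\int x^kP_n\frac{d\mu}{c+x^2}$. Inducting from $k=1$ then gives the formula. One must also check that the lemma's range $k\le n$ covers the needed $k\le n-2$ for $P_{n-2}$, and that its conclusion matters only up to $k=n-2$.
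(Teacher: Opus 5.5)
Your proof is correct. In the sufficiency direction it takes a genuinely different route from the paper; the necessity direction is the same as the paper's (test against $Q_0=1$ and $Q_1=x$).

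The paper never checks the moments of $Q_n$ directly. It substitutes the explicit $t_n$ into the quantity $B_n$ of \eqref{eq:bn}. It then uses the recurrences $\lambda_{n+1}P_{n-1}=xP_n-P_{n+1}$ and $\lambda_{n+2}P_n=xP_{n+1}-P_{n+2}$, together with the single instance $\langle P_n,x^2\rangle=-c\langle P_n,1\rangle$ of Lemma \ref{lemma:ratio}, to show $B_n=c$. Orthogonality then comes from Theorem 2.2 of \cite{Marcellan2011}.

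You instead use the full strength of Lemma \ref{lemma:ratio}. Every moment $\int x^kP_m\,\frac{d\mu}{c+x^2}$ with $k\le m$ of the same parity as $m$ equals $(-c)^{\lfloor k/2\rfloor}$ times a single moment. So the one condition defining $t_n$ propagates to all lower monomials, and Proposition \ref{prop:OPSequiv} plus positivity finishes. Your direct induction for the odd case of the lemma, starting from $k=1$, is also sound.

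What your route buys:
\begin{enumerate}
\item It is self-contained and elementary.
\item It identifies the orthogonality measure as exactly $\frac{d\mu(x)}{c+x^2}$ with no further matching. A relation $(x^2+c)v=\mu$ by itself determines a functional $v$ only up to point masses at $\pm i\sqrt{c}$.
\item It proves the denominators are nonzero, hence $t_n\neq 0$, which the paper's proof of this theorem does not address.
\end{enumerate}

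What the paper's route buys is the identity $B_n=c$ itself. That identity links the explicit $t_n$ to the recurrence coefficients $\lambda_n$ in the characterization of \cite{Marcellan2011}, which is the stated purpose of that section, and your argument does not produce it.
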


\begin{proof}

Let $\langle f,g \rangle:=\int_{-\infty}^{\infty}f(x)g(x)\frac{d\mu(x)}{c+x^2}$ and assume $n$ is even. Then  
\begin{multline}\label{eq:iffproof}
\frac{1}{t_{n+1}}\left(\lambda_{n+2}-t_{n+2} \right)\left(\lambda_{n+1}+t_n-t_{n+1}\right)+t_n-\lambda_{n}=\\-\frac{\langle x,P_{n-1}\rangle}{\langle x, P_{n+1}\rangle}\left(\lambda_{n+2}+\frac{\langle 1, P_{n+2}\rangle}{\langle 1, P_{n}\rangle}\right)\left(\lambda_{n+1}-\frac{\langle 1, P_{n}\rangle}{\langle x ,P_{n-2}\rangle}+\frac{\langle x,P_{n+1}\rangle}{\langle x, P_{n-1}\rangle}\right)\\
-\frac{\langle 1, P_{n}\rangle}{\langle 1, P_{n-2}\rangle}-\lambda_{n}\\
= \frac{-\lambda_{n+2}\langle  \lambda_{n+1}P_{n-1},x\rangle}{\langle P_{n+1},x \rangle}-\frac{\lambda_{n+1}\langle P_{n-1},x \rangle \langle P_{n+2},1\rangle }{\langle P_{n+1},x\rangle \langle P_{n},1\rangle}+\frac{\lambda_{n+2}\langle P_{n-1},x \rangle \langle P_{n},1\rangle }{\langle  P_{n+1},x \rangle \langle P_{n-2},1\rangle}\\
+\frac{\langle P_{n-1},x\rangle \langle P_{n+2},1\rangle }{\langle P_{n+1},x \rangle \langle P_{n-2},1 \rangle}-\frac{\langle P_{n+2},1 \rangle}{\langle P_n,1 \rangle}-\frac{\langle P_{n},1\rangle}{\langle  P_{n-2},1 \rangle}-\lambda_{n+2}-\lambda_n
\end{multline}
Using the fact that $\lambda_{n+1}P_{n-1}(x)=xP_n(x)-P_{n+1}(x)$ and $\lambda_{n+2}P_n(x)=xP_{n+1}(x)-P_{n+2}(x)$ gives \eqref{eq:iffproof} is equal to
\begin{equation}\label{eq:iff2}
    \frac{-\lambda_{n+2}\langle P_n, x^2\rangle }{\langle P_{n+1},x\rangle }-\frac{\langle P_{n+2},1\rangle \langle P_n,x^2\rangle}{\langle P_{n+1},x\rangle \langle P_n, 1, \rangle }+\frac{\langle P_{n-1},x\rangle}{\langle P_{n-2},1\rangle}-\frac{\langle P_{n},1\rangle }{\langle P_{n-2},1\rangle}-\lambda_n.
\end{equation}Using Lemma \ref{lemma:ratio}, we know that for $n\geq 2$, $\langle P_n, x^2\rangle = -c\langle P_n, 1\rangle$, thus \eqref{eq:iff2} is equal to 
\begin{align*}
\frac{1}{t_{n+1}}\left(\lambda_{n+2}-t_{n+2} \right)&\left(\lambda_{n+1}+t_n-t_{n+1}\right)+t_n-\lambda_{n}=\\
&=\frac{c\langle \lambda_{n+2}P_n+P_{n+2},1\rangle}{\langle P_{n+1},x\rangle}+\frac{\langle xP_{n-1}-P_n,1\rangle}{\langle P_{n-2},1\rangle}-\lambda_n
\\&=\frac{c\langle P_{n+1},x\rangle}{\langle P_{n+1},x\rangle}+\frac{\langle \lambda_nP_{n-2},1\rangle}{\langle P_{n-2},1\rangle}-\lambda_n\\
&=c. \hspace{3.4in}
\end{align*}
The case for $n \geq 1$ odd follows similarly. Hence, by Theorem 2.2 of \cite{Marcellan2011}, $\{Q_n(x)\}_{n=0}^{\infty}$ is an OPS with respect to $\frac{d\mu(x)}{c+x^2}$.

It is clear that if $\{Q_n(x)\}_{n=0}^{\infty}$ is an OPS with respect to $\frac{d\mu(x)}{c+x^2}$, then for $n \geq 1$,
\begin{equation*}
    \int_{-\infty}^{\infty}Q_n(x)\frac{d\mu(x)}{c+x^2}=0
\end{equation*} and for $n \geq 2,$
  \begin{equation*}
    \int_{-\infty}^{\infty}xQ_n(x)\frac{d\mu(x)}{c+x^2}=0
\end{equation*} thus, the coefficients must take the form in Theorem \ref{thrm:quasicoef}.
   \end{proof}

Notice from \eqref{eq:an_1} that for $n\geq 0$ even, $t_{n+2}=A_n$. Using Lemma \ref{lemma:ratio}, one can check that $t_{n+2}=A_n$ for $n\geq 3$ odd. Thus, defining polynomials $Q_n(x)$ as in Theorem \ref{thrm:quasicoef} results in an OPS with respect to $\frac{d\mu(x)}{c+x^2}$ such that $Q_n(x)=R_n(x)$ except for $n = 1, 3$. In other words, the exceptional sequence $\{R_n(x)\}_{n\in \mathbb{N}\setminus \{1\}}$ and the classically orthogonal sequence $\{Q_n(x)\}_{n=0}^{\infty}$ overlap at all except two polynomials. 

\begin{example}Let $T_n(x)$ is the monic $n$-th degree Chebyshev polynomial of the first kind. We saw in Proposition \ref{prop:An} that $A_n=\frac{3}{4}-\frac{\sqrt{2}}{2}$. Thus, letting $t_n=\frac{3}{4}-\frac{\sqrt{2}}{2}$ for $n \geq 2$, Theorem \ref{thrm:quasicoef} states that defining $Q_n(x):=T_n(x)+t_nT_{n-2}(x)$ results in a monic OPS. Even further,  using Theorem 2.2 of \cite{Marcellan2011}, one can check that $\{Q_n(x)\}_{n=0}^{\infty}$ is an OPS with respect to $\frac{dx}{(1+x^2)\sqrt{1-x^2}}$.
\end{example}
We emphasize that the analysis in this section does not guarantee the orthogonality of the exceptional sequence $\{R_n(x)\}_{n\in \mathbb{N}\setminus \{1\}}$ (in particular, with $R_3(x)$), which is why the results of Section \ref{sec:order 1} are still new.

 \vspace{7mm}

\noindent {\bf Acknowledgments.} 
Research was supported by the Department of Mathematical Sciences at Bentley University.


\begin{thebibliography}{99}

\bibitem{BD23}R. Bailey and M. Derevyagin. \textit{Complex Jacobi matrices generated by Darboux transformations.} J. Approx. Theory 288 (2023), Paper No. 105876, 33 pp.

\bibitem{Bailey2024}
R. Bailey and M. Derevyagin. \textit{DEK-type orthogonal polynomials and a modification of the Christoffel formula}. J. Comput. Appl. Math. 438 (2024), Paper No. 115561.

\bibitem{Botta22}
V. Botta and  M.H. Suni,
\textit{On the location of zeros of quasi-orthogonal polynomials with applications to some real self-reciprocal polynomials}.
J. Class. Anal. {\bf 19} (2022), no.~2, 89--115.

\bibitem{BM96}A. Branquinho, F. Marcell\'an. \textit{Generating new classes of orthogonal polynomials}. Internat. J. Math. Math. Sci. 19 (1996), 643-656.

\bibitem{Chihara}T.S Chihara. {\it An Introduciton to Orthogonal Polynomials, Vol 13}, 1978.

\bibitem{MarcellanBueno}M.I.Bueno, F. Marcell\'an. \textit{Daroubx transformation and perturbation of linear functionals}. Linear Algebra Appl., 384 (2004), 215-242.


\bibitem{DGM14}M. Derevyagin, J.C. García-Ardila, F. Marcellán.
\textit{Multiple Geronimus transformations.}
Linear Algebra and its Applications, 454 (2014), 158-183.


\bibitem{DM13}M. Derevyagin, F. Marcell\'an. \textit{A note on the Geronimus transformation and Sobolev orthogonal polynomials}. Number.Algorithms. 67 (2014), 271–287 . 


\bibitem{D92} S.Yu. Dubov, V.M. Eleonskii, and N.E Kulagin, \textit{Equidistant Spectra of Anharmonic Oscillators}.  Soviet Phys. JETP 75 (1992), no. 3, 446--451; translated from Zh. Èksper. Teoret. Fiz. 102 (1992), no. 3, 814--825 (Russian).

\bibitem{DEK94}
S.Yu. Dubov, V.M. Eleonskii, and N.E Kulagin, \textit{Equidistant spectra of anharmonic oscillators}. Chaos 4 (1994), no. 1, 47–53. 

\bibitem{Duran25}
A.J. Durán, \textit{Zeros of linear combinations of orthogonal polynomials}.(2025). arXiv preprint arXiv:2505.11956.

\bibitem{D21}
A. Dur\'an, \textit{Exceptional orthogonal polynomials}.
Lectures on orthogonal polynomials and special functions, 1-75, London Math. Soc. Lecture Note Ser., 464, Cambridge Univ. Press, Cambridge, 2021.

\bibitem{Fejer33}L. Fej\'er. \textit{Mechanische Quadraturen mit positiven Cotesschen Zahlen}, Math. Z. 37 (1933), 287-309.



\bibitem{GGM19}
M.A. Garc\'ia-Ferrero, D. G\'omez-Ullate, R. Milson, \textit{A Bochner type characterization theorem for exceptional orthogonal polynomials}. J. Math. Anal. Appl. 472 (2019), no. 1, 584--626.

\bibitem{GM20}
D. G\'omez-Ullate, R. Milson, \textit{Exceptional orthogonal polynomials and rational solutions to Painlev\'e equations}. In Orthogonal polynomials, 335-386, Tutor. Sch. Workshops Math. Sci., Birkh\"auser/Springer, Cham, 2020.

\bibitem{Grinshpun2004} Grinshpun, Z. \textit{Special linear combinations of orthogonal polynomials}. J. Math. Anal. Appl., 299 (2004), 1-18.

\bibitem{I09}
M. E. H. Ismail, \textit{Classical and quantum orthogonal polynomials in one variable}. With two chapters by Walter Van Assche. With a foreword by Richard A. Askey. Reprint of the 2005 original. Encyclopedia of Mathematics and its Applications, 98. Cambridge University Press, Cambridge, 2009

\bibitem{Marcellan2010}
M. Alfaro, F. Marcellan, A. Pena, and M. Luisa Rezola.\textit{When do linear combinations of orthogonal polynomials yield new sequences of
orthogonal polynomials?}  Journal of Computational and Applied Math-
ematics 233.6 (2010), 1446–1452. 

\bibitem{Marcellan2011}M. Alfaro, A. Peña, M. L. Rezola and F. Marcellán Español, Orthogonal polynomials associated with an inverse quadratic spectral transform, Comput. Math. Appl. 61 (2011), no. 4, 888–900.

\bibitem{shohat}J. Shohat. \textit{On mechanical quadratures, in particular, with positive coefficients}. Trans. Amer. Math. Soc. 42 (1937), 461-496.

\bibitem{Xu94}Y. Xu. \textit{Quasi-orthogonal polynomials, quadrature, and interpolation}. J. Math. Anal. Appl. 182 (1994), 779–799.
      
\end{thebibliography}
\end{document}